\def\ns{\footnotesize \it}
\begin{document}

{\theoremstyle{plain}
  \newtheorem{theorem}{Theorem}[section]
  \newtheorem{corollary}[theorem]{Corollary}
  \newtheorem{proposition}[theorem]{Proposition}
  \newtheorem{lemma}[theorem]{Lemma}
  \newtheorem{question}[theorem]{Question}
  \newtheorem{conjecture}[theorem]{Conjecture}
}

{\theoremstyle{definition}
  \newtheorem{definition}[theorem]{Definition}
  \newtheorem{notation}[theorem]{Notation}
  \newtheorem{remark}[theorem]{Remark}
  \newtheorem{example}[theorem]{Example}
}

\title {Gorenstein Hilbert Coefficients}
\maketitle

\author{\begin{center} Sabine El Khoury \end{center}
{\ns Department of Mathematics, American University of Beirut, Beirut, Lebanon. se24@aub.edu.lb} \\  
\begin{center}Hema Srinivasan\end{center}
{ \ns Department of Mathematics, University of Missouri, Columbia, Missouri,
USA. hema@math.missouri.edu
}\\ [.1in]}
\begin{abstract}  We prove upper and lower bounds for all the coefficients in the Hilbert Polynomial of a 
graded Gorenstein algebra $S=R/I$ with a quasi-pure resolution over $R$.  The bounds are in terms of the minimal and the maximal shifts in the resolution of $R$ .  These bounds are analogous to the bounds for the multiplicity found in  \cite{S}  and are stronger than 
the bounds for the Cohen Macaulay algebras found in \cite{HZ}.
\end{abstract}
\section{Introduction}
Let $S=\bigoplus S_i$ be a standard graded $k$-algebra of dimension $d$, finitely generated in degree one.
 $H(S,i) = \dim_kS_i$ is the Hilbert function of $S$.  It is well known that  $H(S,i)$, for $i>>0$,  is a polynomial $P_S(x)$, called the   {\it the Hilbert polynomial} of $S$.  $P_S(x)$ has  degree $d-1$.  If we write,
  $$P_S(x)= \displaystyle \sum_{i=0}^{d-1} (-1)^{i}e_{i}  {  x+d-1-i \choose x} \\
= \frac {e_0}{(d-1)!} x^{d-1}+\ldots +(-1)^{d-1}e_{d-1}\\
$$

\noindent Then the coefficients $e_{i}$ are called the {\it Hilbert coefficients} of $S$.  The first one, $e_0$  called the multiplicity  is the most studied and is denoted by $e$. 

If we write $S = R/I$, where $R$ is the polynomial ring in $n$ variables and $I$ is a homogeneous ideal of $R$, then all these coefficients can be computed from the shifts in the minimal homogenous $R$- resolution 
$\bf{F}$ of $S$ given as follows:
 $$0\rightarrow \displaystyle\bigoplus_{j=m_s}^{M_s} R(-j)^{\beta_{sj} }\stackrel{\delta_s}{\rightarrow} \ldots\rightarrow \displaystyle\bigoplus_{j=m_i}^{M_i}R(-j)^{\beta_{ij}}  \stackrel{\delta_i}{\rightarrow}  \ldots\rightarrow \displaystyle\bigoplus_{j=m_1}^{M_1}R(-j)^{\beta _{1j}}\stackrel{\delta_1} \rightarrow R \rightarrow R/I \rightarrow 0 $$
Let $h =$ height of $I$, so that $h \leq s$.   
In 1995, Herzog and Srinivasan \cite {HS} proved that if this resolution is quasi-pure, i.e. if $m_i\ge M_{i-1}$,  then 
 $$ \frac{\prod_{i=1}^sm_i}{s!}\leq e(S) \leq \frac{ \prod_{i=1}^sM_i}{s!},  \text {if } h=s$$ and   
 $e(S) \leq \frac{\prod_{i=1}^hM_i}{h!}$ if $h<s$.     

Further, Herzog, Huneke and Srinivasan  conjectured  this to hold for all homogeneous algebras $S$ which came to be known as the multiplicity conjecture. 

When $S$ is Gorenstein, Srinivasan established stronger bounds for the multiplicity.  

{\bf Theorem}  [Srinivasan \cite {S}]
{\it If $S$ is a homogeneous Gorenstein algebra with quasi-pure resolution of length $s=2k$ or $2k+1$, then
$$ \frac{m_1 \ldots m_kM_{k+1} \ldots M_s}{s!} \leq e(S) \leq \frac {M_1 \ldots M_k m_{k+1} \ldots m_s}{s!}.$$}
 In this paper, we establish bounds for all the remaining Hilbert coefficients of Gorenstein Algebras with quasi-pure resolutions analogous to the above bounds for the multiplicity. 
We prove in \ref {main}

{\bf {Theorem}}  \ref{main}  {\it  If $S$ is a homogeneous Gorenstein Algebra with quasi-pure resolution of length $s = 2k$ or $2k+1$.  Then, for $0\leq l \leq n-s$, \\

\noindent \small  $f_{l}(m_1 \ldots  m_kM_{k+1} \ldots  M_s) \frac{m_1 \ldots  m_kM_{k+1} \ldots  M_s }{(s+l)!}\leq e_l(S) 
\leq f_{l}(M_1 \ldots  M_km_{k+1} \ldots  m_s)\frac{M_1 \ldots  M_km_{k+1} \ldots  m_s}{(s+l)!}$
\normalsize with $ f_{l}(a_1, \ldots a_s)= \displaystyle \sum_{1 \leq i_1 \leq \ldots  i_l \leq s} \displaystyle \prod_{t=1}^l (a_{i_t}-(i_{t}+t-1))$ and $f_0 = 1$  }
 
Boij and S\"oderberg \cite{BS08}  conjectured that Betti sequences of all graded algebras can be written (uniquely) as sums of positive rational multiples of betti sequences of pure algebras which in turn  implied the multiplicity conjecture. In 2008, these conjectures were proved by Eisenbud and Schreyer \cite{ES} for C-M modules in characteristic zero and extended to non C-M modules by Boij and S\"oderberg \cite{BS08}.
 
Using these results,  Herzog and Zheng \cite{HZ} showed  that if $S$ is Cohen-Macaulay of codimension $s$, then all Hilbert coefficients satisfy
$$\frac{m_1m_2\ldots m_s}{(s+i)!}h_i(m_1, \ldots m_s)\leq e_i(S) \leq \frac{M_1M_2\ldots M_s}{(s+i)!}h_i(M_1, \ldots M_s)$$ 
with $h_i(d_1, \ldots d_s)= \displaystyle \sum_{1 \leq j_1\leq  \ldots  j_i \leq s} \displaystyle \prod_{k=1}^i(d_{j_k}-(j_k+k-1))$  and $h_0(d_1, \ldots d_{s})=1$

Our results extend those of Srinivasan \cite{S} as well as the above result \cite{HZ} to all coefficients of Gorenstein algebras with quasi-pure resolutions.

In section $3$ we give an explicit formula of the Hilbert coefficients as a function of the shifts of the minimal resolution of a Gorenstein algebra.
These expressions depend on whether the projective dimension is even or odd. 

In section $4$, we establish the stronger bounds for the higher Hilbert coefficients 
when the algbera has a quasi-pure resolution.   

\section{Preliminaries and Notations}
Let $R=K [x_1, \ldots x_n]$ be the poynomial ring in $n$ variables,  $I$ be a homogeneous ideal contained in $(x_1,x_2, \ldots , x_n)$ and $S = R/I$.  Let  $\bf{F}$ be the minimal homogeneous resolution of $S$ over $R$ given by:
$$0\rightarrow \displaystyle\bigoplus_{j=1}^{b_s} R(-d_{sj}) \stackrel{\delta_s}{\rightarrow} \ldots\rightarrow \displaystyle\bigoplus_{j=1}^{b_i}R(-d_{ij})  \stackrel{\delta_i}{\rightarrow}  \ldots\rightarrow \displaystyle\bigoplus_{j=1}^{b_1}R(-d_{1j})\stackrel{\delta_1} \rightarrow R \rightarrow R/I \rightarrow 0 $$

\begin{definition} A  resolution is called quasi-pure if $d_{ij} \geq d_{i-1,l}$ for all $j$ and $l$, that is, if $m_i \geq M_{i-1}$ for all $i$.
\end{definition}

Suppose $S$ is Gorenstein. Then by duality of the resolution, the resolution of $S$ can be written as follows.

If $I$ is of height $ 2k+1$ then

$0 \rightarrow  R(-c) {\rightarrow} \displaystyle\sum_{j=1}^{b_1}R(-(c-a_{1j}) \rightarrow \ldots  \rightarrow \displaystyle\sum_{j=1}^{b_k}R(-(c-a_{kj}))$
\begin{equation}  \label{eq1}\hspace {3cm}
 \rightarrow \displaystyle\sum_{j=1}^{b_k}R(-a_{kj})  \rightarrow  \ldots \rightarrow \displaystyle\sum_{j=1}^{b_1}R(-a_{1j}) \rightarrow R 
\end{equation}	

and if $I$ is of height $2k$ then

$0\rightarrow  R(-c) {\rightarrow} \displaystyle\sum_{j=1}^{b_1}R(-(c-a_{1j}) \rightarrow \ldots \rightarrow \displaystyle\sum_{j=1}^{b_k/2=r_k}R(-(c-a_{kj})) \oplus \displaystyle\sum_{j=1}^{b_k/2=r_k}R(-a_{kj}) $
\begin{equation}\label{eq2} \hspace {5cm} \rightarrow  \ldots \rightarrow \displaystyle\sum_{j=1}^{b_1}R(-a_{1j}) \rightarrow R \end{equation}\\
\\
\begin{remark} \begin{enumerate} \item The minimal shifts in the resolution are:

\begin{align*}
m_i  &=  min_j a_{ij} & \hspace {0.25cm} 1\leq i \leq k \\
&= c - max_ja_{s-i,j} &\hspace{0.25cm} k+1 \leq i < s \\
& = c &\hspace{0.25cm} i = s 
\end{align*}

The maximal shifts in the resolution are:
\begin{align*}
M_i  &=  max_j a_{ij} & \hspace {0.25cm} 1\leq i \leq k \\
&= c - min_ja_{s-i,j} &\hspace{0.25cm} k+1 \leq i < s \\
& = c &\hspace{0.25cm} i = s 
\end{align*}

and $M_s=m_s=c.$ 
\\
\item Let $\alpha_{ij}=a_{ij}(c-a_{ij})$ for $ i \leq k$ with $p_i =min_j \alpha_{ij}=m_iM_{s-i}$ and  \\
$P_i =max_j \alpha_{ij}=M_im_{s-i}.$\\
 \end{enumerate}
 \end{remark}

\begin{definition} Given $(\alpha_1, \alpha_2, \ldots , \alpha_k)$ a sequence of real numbers, we denote the following Vandermonde determinants by
$$V_t=V_t(\alpha_1, \alpha_2, \ldots  , \alpha_{k}) = \left|
\begin{array}{cccc}
  1 & 1& .... & 1 \\
  \alpha_1&\alpha_2& ...&\alpha_k \\
  \alpha_1^2& \alpha_2^2 & ... &\alpha_k^2 \\
  \vdots & \vdots & ...&\vdots  \\
  \alpha_1^{k-2} & \alpha_2^{k-2} & ... & \alpha_k^{k-2} \\
  \alpha_1^{k-1+t}&\alpha_2^{k-1+t}& ... &\alpha_k^{k-1+t}\\
\end{array}\right|$$

 $$= \displaystyle \prod_{1 \leq j<i \leq k}(\alpha_i-\alpha_j)\displaystyle \sum_{\beta_1+\beta_2 + \ldots   \beta_k=t}(\alpha_1^{\beta_1}.\alpha_2^{\beta_2} \ldots  \alpha_k^{\beta_k})$$
 
 \end{definition}
 
  \begin{remark} $V_t(\alpha_1, \alpha_2, \ldots  , \alpha_{k}) \geq 0 $ if the sequence is in ascending order.
\end{remark} 

 As a convention, for any non-negative integers $n,p$, we 
 set the binomial coefficient 
 ${n \choose p} =0$ if $n <p$. 
  
The following binomial identities are essential to our theorems. In \cite {S},  Srinivasan showed

 \begin{lemma}  \label{lemma0} For all $k \geq 0 , c, a \geq 1$
 
  $(c-a)^{n} - a^{n} = \displaystyle \sum_{t=0}^{[\frac{n}{2}]}(-1)^t { n-t -1\choose t}  a^t(c-a)^t(c-2a)c^{n-2t-1} $
  
  $ (c-a)^{n} + a^{n} = \displaystyle \sum_{t=0}^{[\frac{n}{2}]} (-1)^t { n-t \choose t } a^t(c-a)^tc^{n-2t}+\displaystyle \sum_{t=1}^{[\frac{n}{2}]}  (-1)^t  {n-t-1 \choose  t -1 } a^t(c-a)^tc^{n-2t}$
  \end{lemma}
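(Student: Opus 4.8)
The plan is to prove both identities at once by generating functions, reducing everything to a single application of Pascal's rule. Throughout I write $A = a(c-a)$ and regard $a,c$ as formal parameters, so that each claimed identity becomes an equality of polynomials in $a$ and $c$; by the stated convention $\binom{n}{p}=0$ for $n<p$, every sum on the right is finite and its out-of-range terms vanish automatically. First I would establish the master expansion
$$\frac{1}{1 - cz + Az^2} = \sum_{n \geq 0} W_n\, z^n, \qquad W_n = \sum_{t=0}^{[n/2]} (-1)^t \binom{n-t}{t} A^t c^{n-2t}.$$
This follows by writing $\frac{1}{1-(cz - Az^2)} = \sum_{m \geq 0}(cz - Az^2)^m$, applying the binomial theorem to each summand, and collecting the coefficient of $z^n$ by setting $n = m + j$, where $j$ counts the factors of $-Az^2$.

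Next, since $c-a$ and $a$ are the two roots of $X^2 - cX + A$ (their sum is $c$ and their product is $A$), the geometric series $\frac{1}{1-(c-a)z}$ and $\frac{1}{1-az}$ share the common denominator $\bigl(1-(c-a)z\bigr)(1-az) = 1 - cz + Az^2$. Subtracting and adding them yields
$$\sum_{n\geq 0}\bigl((c-a)^n - a^n\bigr)z^n = \frac{(c-2a)z}{1 - cz + Az^2}, \qquad \sum_{n \geq 0}\bigl((c-a)^n + a^n\bigr)z^n = \frac{2 - cz}{1 - cz + Az^2}.$$
Comparing coefficients against the master expansion gives, for the difference, $(c-a)^n - a^n = (c-2a)\,W_{n-1}$; writing out $W_{n-1}$ (with $A^t = a^t(c-a)^t$ and $c^{(n-1)-2t}=c^{n-2t-1}$) reproduces the first identity verbatim.

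For the sum I would read off $(c-a)^n + a^n = 2W_n - c\,W_{n-1}$. Here the coefficient of $A^t c^{n-2t}$ on the left is $(-1)^t\bigl(2\binom{n-t}{t} - \binom{n-1-t}{t}\bigr)$, whereas the asserted right-hand side has coefficient $(-1)^t\bigl(\binom{n-t}{t} + \binom{n-t-1}{t-1}\bigr)$. The two match precisely because Pascal's rule gives $\binom{n-t}{t} = \binom{n-t-1}{t} + \binom{n-t-1}{t-1}$, equivalently $\binom{n-t}{t} - \binom{n-1-t}{t} = \binom{n-t-1}{t-1}$, which is exactly the required equality of coefficients. This finishes both identities.

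The only delicate point — which I expect to be a bookkeeping nuisance rather than a genuine obstacle — is the handling of summation ranges and edge cases: the degenerate values at $n=0,1$, the $t=0$ term of the second sum, and the shift $W_{n-1}$ at $n=0$. All of these are absorbed uniformly by the convention $\binom{n}{p}=0$ for $n<p$, so that the formal power-series manipulations are legitimate as polynomial identities. As a generating-function-free alternative, one could instead observe that both $(c-a)^n \pm a^n$ satisfy the recurrence $f_n = c\,f_{n-1} - A\,f_{n-2}$ (with $f_0=2,\ f_1=c$ in the sum case and $f_0=0,\ f_1=c-2a$ in the difference case) and verify by induction on $n$ that each right-hand side obeys the same recurrence with the same initial data, the inductive step collapsing once more to the Pascal identity above.
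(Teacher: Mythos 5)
Your proof is correct, but it takes a genuinely different route from the paper: the paper does not prove Lemma \ref{lemma0} at all, deferring instead to lemmas 2--3 of Srinivasan \cite{S}, where the identities are established by induction on the exponent via the recurrence $f_n = c f_{n-1} - a(c-a) f_{n-2}$ satisfied by $(c-a)^n \pm a^n$ (essentially the ``generating-function-free alternative'' you sketch in your last sentence). Your main argument instead packages both identities into one partial-fraction computation: since $a$ and $c-a$ are the roots of $X^2 - cX + a(c-a)$, the series $\sum_n \bigl((c-a)^n \mp a^n\bigr)z^n$ equal $\frac{(c-2a)z}{1-cz+Az^2}$ and $\frac{2-cz}{1-cz+Az^2}$ respectively, and the master expansion $W_n = \sum_t (-1)^t \binom{n-t}{t} A^t c^{n-2t}$ of $\frac{1}{1-cz+Az^2}$ plus one application of Pascal's rule finishes the job. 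This buys a self-contained, uniform proof in which the structural reason for the identities (the quadratic relation between $a$, $c-a$ and $c$, $A$) is visible, whereas the cited inductive proof is more elementary but handles the two identities and their initial conditions separately. One small inaccuracy in your closing remarks: the degenerate case $n=0$ of the second identity is \emph{not} absorbed by the binomial convention --- there the left side is $2$ while the right side is $1$, so the identity genuinely fails. This is a defect of the lemma's statement (the quantifier ``$k\geq 0$'' versus the exponent $n$ is already garbled in the paper), not of your argument: your generating-function identity $(c-a)^n + a^n = 2W_n - cW_{n-1}$ is correct even at $n=0$, and the passage to the asserted form via Pascal's rule is valid for all $n\geq 1$, which is the only range in which the lemma is ever invoked (it is applied with exponent $s+r \geq s \geq 1$).
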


The proof goes along the same lines as in [\cite {S}, lemmas $2$-$3$].



\section{Hilbert Coefficients of Gorenstein Algebras.}

Let $R=K [x_1, \ldots x_n]$ and $I$ a graded ideal. Let $\mathbb F$ be the minimal resolution of $S=R/I$,
$$0\rightarrow \displaystyle\bigoplus_{j=1}^{b_s} R(-d_{sj}) \stackrel{\delta_s}{\rightarrow} \ldots\rightarrow \displaystyle\bigoplus_{j=1}^{b_i}R(-d_{ij})  \stackrel{\delta_i}{\rightarrow}  \ldots\rightarrow \displaystyle\bigoplus_{j=1}^{b_1}R(-d_{1j})\stackrel{\delta_1} \rightarrow R \rightarrow R/I \rightarrow 0 $$
 
\begin{theorem} (Peskine-Szpiro) Suppose $S$ is C-M then these shifts $d_{ij}$ are known to satisfy $[\mbox{\cite{PS}},1]$
$$
\displaystyle\sum_{i=1}^{s}(-1)^i \displaystyle\sum_{j=1}^{b_i}d_{ij}^k  = 
\left\{  \begin{array}{ccccc}  -1&& k=0\\
                             0&& 1 \leq k <s\\
                         (-1)^s s! e && k=s\\
                         \end{array}  \right.\ $$
\end{theorem}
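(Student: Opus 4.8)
The plan is to read the alternating sums off the Hilbert series of $S$. Writing the Hilbert series of each free module $R(-d)$ as $t^{d}/(1-t)^n$, additivity along the resolution $\mathbb F$ gives
\[ H_S(t)=\frac{K(t)}{(1-t)^n}, \qquad K(t)=\sum_{i=0}^{s}(-1)^i\sum_{j=1}^{b_i}t^{d_{ij}}, \]
where the $i=0$ summand is the single term $t^{0}=1$ coming from $R$ itself (so $b_0=1$ and $d_{0,1}=0$). Since $S$ is Cohen--Macaulay of codimension $s$, its Hilbert series also has the reduced form $H_S(t)=Q(t)/(1-t)^{\,n-s}$ with $Q(t)\in\mathbb Z[t]$ and $Q(1)=e(S)\neq 0$. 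Comparing the two expressions yields the factorization $K(t)=(1-t)^{s}Q(t)$, which is the single structural input I will exploit.

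Next I introduce the operator $\theta=t\frac{d}{dt}$ and observe that $\theta^{k}K(t)=\sum_{i,j}(-1)^i d_{ij}^{\,k}\,t^{d_{ij}}$, so that evaluating at $t=1$ produces exactly the quantity to be computed:
\[ (\theta^{k}K)(1)=\sum_{i=0}^{s}(-1)^i\sum_{j=1}^{b_i}d_{ij}^{\,k}. \]
Because $\theta$ annihilates the constant term $1$ for every $k\ge 1$, this coincides with the stated sum $\sum_{i=1}^{s}(-1)^i\sum_j d_{ij}^{\,k}$ whenever $k\ge 1$; for $k=0$ one instead has $K(1)=\sum_{i=0}^s(-1)^i b_i=0$, hence $\sum_{i=1}^{s}(-1)^i b_i=-1$, which gives the first case.

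The core step is then to evaluate $(\theta^{k}K)(1)$ using $K(t)=(1-t)^{s}Q(t)$. Expanding $\theta^{k}$ in ordinary derivatives gives $(\theta^{k}K)(1)=\sum_{j=0}^{k}\sigma_{k,j}\,K^{(j)}(1)$, where $\sigma_{k,j}$ are the Stirling numbers of the second kind (the powers of $t$ all evaluate to $1$ at $t=1$). Since $(1-t)^{s}$ vanishes to order $s$ at $t=1$, we get $K^{(j)}(1)=0$ for all $j<s$; therefore $(\theta^{k}K)(1)=0$ for every $1\le k<s$, which is the middle case. For $k=s$ only the top term survives, $(\theta^{s}K)(1)=\sigma_{s,s}K^{(s)}(1)=K^{(s)}(1)$, and differentiating $(1-t)^{s}Q(t)$ exactly $s$ times and setting $t=1$ leaves $K^{(s)}(1)=(-1)^{s}s!\,Q(1)=(-1)^{s}s!\,e(S)$, the last case.

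The only genuinely delicate points are bookkeeping rather than conceptual: one must track the $i=0$ contribution so that the $k=0$ case lands on $-1$ rather than $0$, and one must confirm that $Q(1)$ is indeed the multiplicity $e$. I expect no real obstacle beyond verifying the elementary identity converting $\theta^{k}$ into ordinary derivatives --- equivalently, checking directly that a polynomial divisible by $(1-t)^{s}$ satisfies $(\theta^{k}K)(1)=0$ for $k<s$ --- which can also be established by a one-line induction on $k$ that avoids Stirling numbers altogether.
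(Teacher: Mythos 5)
Your proof is correct. One point of orientation first: the paper does not actually prove this statement --- it quotes it from Peskine--Szpiro with the citation \cite{PS} --- so the natural comparison is with the paper's proof of its generalization to all Hilbert coefficients, Theorem \ref{theorem1}. Your argument shares the same skeleton as that proof: both start from the factorization $K(t)=\sum_{i=0}^{s}(-1)^i\sum_{j}t^{d_{ij}}=Q(t)(1-t)^{s}$ of the Hilbert series numerator (valid exactly because $S$ is C-M of codimension $s$, so the reduced denominator is $(1-t)^{n-s}$) and extract the identities by differentiating and evaluating at $t=1$. The genuine difference is the choice of operator. The paper applies $d/dt$ repeatedly, which produces falling factorials $\binom{d_{ij}}{l}\,l!=\prod_{r=0}^{l-1}(d_{ij}-r)$ and therefore forces the conversion back to power sums through the elementary symmetric functions $\nu_{l-r}$; your operator $\theta=t\,\frac{d}{dt}$ yields the pure power sums $\sum_{i,j}(-1)^{i}d_{ij}^{k}t^{d_{ij}}$ directly, so no such conversion is needed --- a real streamlining for the statement at hand, though it would not by itself give the paper's formula for $e_l$, where the $\nu$'s are part of the answer. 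Your bookkeeping is also sound: the $i=0$ term is what turns $K(1)=0$ into $\sum_{i\geq 1}(-1)^ib_i=-1$, and $Q(1)=e(S)$ is exactly the paper's normalization $e_i=Q^{(i)}(1)/i!$. The only step you deferred --- that $(\theta^{k}K)(1)$ vanishes for $k<s$ and equals $(-1)^{s}s!\,Q(1)$ for $k=s$ --- is indeed closed by the one-line induction you sketch: from
\begin{equation*}
\theta\bigl[(1-t)^{m}P(t)\bigr]=(1-t)^{m-1}\,t\,\bigl(-mP(t)+(1-t)P'(t)\bigr)
\end{equation*}
one sees that $\theta^{k}K$ is divisible by $(1-t)^{s-k}$, and that the successive values at $t=1$ of the cofactors pick up the factors $-s,-(s-1),\dots,-1$, giving $(\theta^{s}K)(1)=(-1)^{s}s!\,Q(1)$ without any appeal to Stirling numbers.
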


These equations can be thought of as defining the multiplicity, $e= e_0(S)$.  In fact, the higher Hilbert Coefficients can also be expressed in terms of the shifts in the resolution\cite{E}. We include a simple proof  for the sake of completeness. 

\begin{theorem} \label{theorem1} $$ \begin{array}{ccccccc}
(-1)^s (s+l)!e_l &=& \displaystyle \sum_{r=0}^{l}(-1)^{l-r}\nu_{l-r} \displaystyle\sum _{i=0}^{s}(-1)^i  \displaystyle\sum_{j=1}^{b_i} d_{ij}^{s+r} \\
\end{array}$$
with $\nu_{l-r}=\displaystyle \sum_{1 \leq \xi_1 <\xi_2 <\ldots  <\xi_{l-r} \leq s+l-1}\xi_1.\xi_2. \ldots  \xi_{l-r}$ and $\nu_0=1.$
\end{theorem}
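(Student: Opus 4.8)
The plan is to read the Hilbert coefficients off the expansion of the Hilbert series about $t=1$, and then to convert the resulting binomial sums into the power sums $\sum_{i,j}(-1)^i d_{ij}^{\,s+r}$. Taking the alternating sum of Hilbert series along the minimal resolution $\mathbb F$ gives $H_S(t)=K(t)/(1-t)^n$ for the Hilbert series $H_S(t)=\sum_{m\ge 0}H(S,m)t^m$, where $K(t)=\sum_{i=0}^s(-1)^i\sum_j t^{d_{ij}}$ (with the convention $d_{0,1}=0$ for the free module $R$ in homological degree $0$). I would substitute $t=1-u$ and use $(1-u)^{d_{ij}}=\sum_{k\ge0}\binom{d_{ij}}{k}(-u)^k$ to write $K(t)=\sum_{k\ge0}(-1)^k c_k(1-t)^k$ with $c_k:=\sum_{i,j}(-1)^i\binom{d_{ij}}{k}$. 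Since $\binom{x}{k}$ is a rational combination of $1,x,\dots,x^k$, the Peskine--Szpiro relations (the theorem just stated) force $c_k=0$ for $0\le k<s$ and $c_s=(-1)^s e$. Dividing by $(1-t)^n$ and writing $n-s=d=\dim S$ then yields
$$H_S(t)=\sum_{l\ge 0}(-1)^{s+l}c_{s+l}\,(1-t)^{-(d-l)}.$$

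The second step is to match this against the definition of the $e_l$. Expanding $(1-t)^{-(d-l)}=\sum_m\binom{m+d-l-1}{d-l-1}t^m$, the terms with $l\ge d$ are polynomials in $t$ and drop out of the Hilbert polynomial, so for $m\gg0$ one gets $P_S(m)=\sum_{l=0}^{d-1}(-1)^{s+l}c_{s+l}\binom{m+d-1-l}{d-1-l}$. Comparing with $P_S(x)=\sum_{l=0}^{d-1}(-1)^l e_l\binom{x+d-1-l}{d-1-l}$ and using that these binomials are linearly independent (they have distinct degrees $d-1,\dots,0$ in the variable), I would conclude $e_l=(-1)^s c_{s+l}$, i.e. $(-1)^s e_l=\sum_{i,j}(-1)^i\binom{d_{ij}}{s+l}$.

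It remains to turn $\binom{d_{ij}}{s+l}$ into power sums. Writing $\binom{x}{s+l}=\frac{1}{(s+l)!}\,x\prod_{q=1}^{s+l-1}(x-q)$ and expanding the product gives $x\prod_{q=1}^{s+l-1}(x-q)=\sum_{j=0}^{s+l-1}(-1)^{s+l-1-j}\,\nu_{s+l-1-j}\,x^{j+1}$, where for $0\le p\le s+l-1$ I write $\nu_p=\sum_{1\le\xi_1<\cdots<\xi_p\le s+l-1}\xi_1\cdots\xi_p$ for the degree-$p$ elementary symmetric function appearing in the statement. Reindexing by $j+1=s+r$ and applying $\sum_{i,j}(-1)^i$, the Peskine--Szpiro vanishing kills every term with $j+1<s$, leaving only $r=0,\dots,l$ with coefficient $(-1)^{l-r}\nu_{l-r}$; this produces
$$(-1)^s(s+l)!\,e_l=(s+l)!\,c_{s+l}=\sum_{r=0}^{l}(-1)^{l-r}\nu_{l-r}\sum_{i=0}^s(-1)^i\sum_j d_{ij}^{\,s+r},$$
which is the assertion.

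The two power-series manipulations are routine; the one step needing genuine care is the bookkeeping in the last paragraph, namely verifying that the index shift $j+1=s+r$ sends $\nu_{s+l-1-j}$ to precisely $\nu_{l-r}$ with sign $(-1)^{l-r}$, and confirming that the contributions from $j+1<s$ are genuinely \emph{annihilated} by Peskine--Szpiro rather than merely of lower order. Everything else is forced once the clean intermediate identity $e_l=(-1)^s c_{s+l}$ is established.
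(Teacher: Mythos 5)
Your proposal is correct, and its computational core is the same as the paper's: both arguments come down to evaluating $\sum_{i=0}^{s}(-1)^i\sum_{j}$ of the falling factorial $d_{ij}(d_{ij}-1)\cdots\bigl(d_{ij}-(s+l)+1\bigr)$, expanding that falling factorial into power sums with the elementary symmetric coefficients $\nu_{l-r}$, and invoking Peskine--Szpiro to kill the power sums of degree $<s$ --- and this is indeed a genuine annihilation, as you wanted to confirm: with the $i=0$ term included, the alternating power sums vanish identically for all degrees $0,1,\dots,s-1$, not just approximately. Where you genuinely diverge from the paper is in how this quantity gets tied to $e_l$. The paper differentiates the identity $\sum_{i,j}(-1)^i t^{d_{ij}}=Q(t)(1-t)^s$ exactly $s+l$ times at $t=1$ and applies the Leibniz rule, taking the characterization $e_l=Q^{(l)}(1)/l!$ of the Hilbert coefficients as known; you instead expand the numerator in powers of $(1-t)$, divide by $(1-t)^n$, and match coefficients of the Hilbert polynomial against the binomial basis. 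Your route is more self-contained relative to the introduction's definition of the $e_l$, but it has a slightly smaller scope: the polynomial-matching step pins down $e_l$ only for $0\le l\le d-1$, since the terms $(1-t)^k$ with $k\ge n$ are polynomials in $t$ and leave no trace in $P_S$. The paper later uses this theorem for $0\le l\le n-s=d$, and at the endpoint $l=d$ one must fall back on the series definition $e_l=Q^{(l)}(1)/l!$; the good news is that under that definition your own expansion $Q(t)=K(t)/(1-t)^s=\sum_{l\ge 0}(-1)^{s+l}c_{s+l}(1-t)^l$ gives $e_l=(-1)^s c_{s+l}$ for every $l$ in one line, so your argument closes this gap immediately and, in that form, is arguably cleaner than the Leibniz computation.
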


\begin{proof} We know that  the Hilbert function of $R/I$ is $$ \frac {\sum_{i=0}^{s} (-1)^i  \sum_{j=1}^{b_i} t^{d_{ij}}} {(1-t)^n}=\frac{Q(t)}{(1-t)^d}$$
where $d = dim$ $R/I = n-s$ and $\frac{Q^{(i)}(1)}{i!}=e_i$.

\noindent We get
\begin{equation} \label{equ0}
\displaystyle\sum_{i=0}^{s}(-1)^i \displaystyle \sum _{j=1}^{b_i}t^{d_{ij}}=Q(t)(1-t)^s \end{equation}
 \noindent We denote these two quantities by $S_{R/I}(t)$. We differentiate both sides $l+s$ times, and evaluate them at $t=1$. 
We first start by the right hand side
$$S^{(s+l)}_{R/I}(t)=  (-1)^s {s+l \choose
l} s! Q^{(l)}(t) + (1-t)P(t)$$
where $P(t)$ is a polynomial in $t$.  Evaluating at $t=1$:
\begin{align*} S^{(s+l)}_{R/I}(1)=&  (-1)^s {s+l \choose
l} s! Q^{(l)}(1) + 0\\
=&  (-1)^s (s+l)!e_l \end{align*}

\noindent On the other hand, $S_{R/I}(t) = \displaystyle\sum_{i=0}^{s}(-1)^i \displaystyle \sum _{j=1}^{b_i}t^{d_{ij}}$.  So

$$ \begin{array}{cccccc}
 S_{R/I}^{(l)}(1) &=& \displaystyle\sum _{i=0}^{s}(-1)^i  \displaystyle\sum_{j=1}^{b_i} {d_{ij}\choose
l} l!  \\
&=&  \displaystyle\sum _{i=0}^{s}(-1)^i  \displaystyle\sum_{j=1}^{b_i} \displaystyle \prod_{r=0}^{l-1}(d_{ij}-r)\\
&=& \displaystyle\sum _{i=0}^{s}(-1)^i  \displaystyle\sum_{j=1}^{b_i} \displaystyle \sum_{r=1}^{l}(-1)^{l-r}\nu_{l-r}d_{ij}^r
 \end{array}$$
 
 \noindent with $\nu_{l-r} = \displaystyle \sum_{1 \leq \xi_1 < \xi_2 < \dots < \xi_{l-r}<l-1} \xi_1\xi_2 \ldots  \xi_{l-r}$
\noindent and $\nu_0 =1$.
$$\begin{array}{ccccc}
S_{R/I}^{(s+l)}(1)& =& \displaystyle\sum _{i=0}^{s}(-1)^i  \displaystyle\sum_{j=1}^{b_i} \displaystyle \sum_{r=1}^{s+l}(-1)^{s+l-r}\nu_{s+l-r}d_{ij}^r \\
&= &  \displaystyle \sum_{r=1}^{s+l}(-1)^{s+l-r}\nu_{s+l-r} \displaystyle\sum _{i=0}^{s}(-1)^i  \displaystyle\sum_{j=1}^{b_i} d_{ij}^r \\
\end{array}$$

\noindent with $\nu_{s+l-r}=\displaystyle \sum_{1 \leq \xi_1 <\xi_2 <\ldots  <\xi_{s+l-r} \leq s+l-1}\xi_1.\xi_2. \ldots  \xi_{s+l-r}$ and $ \nu_0 = 1$.

\noindent Note that $\displaystyle\sum _{i=0}^{s}(-1)^i  \displaystyle\sum_{j=1}^{b_i} d_{ij}^r = 0$ when $r< s$ and

$$\begin{array}{ccccc}S_{R/I}^{(s+l)}(1) &= &  \displaystyle \sum_{r=s}^{s+l}(-1)^{s+l-r}\nu_{s+l-r} \displaystyle\sum _{i=0}^{s}(-1)^i  \displaystyle\sum_{j=1}^{b_i} d_{ij}^r \\
 &= &  \displaystyle \sum_{r=0}^{l}(-1)^{l-r}\nu_{l-r} \displaystyle\sum _{i=0}^{s}(-1)^i  \displaystyle\sum_{j=1}^{b_i} d_{ij}^{s+r}
\end{array}$$

\noindent and hence the result. 
\end{proof}

Let $I$ be Gorenstein. The minimal free resolution of $I$ is written as in (\ref{eq1}) and (\ref{eq2}) depending on whether the projective dimension of $I$ is even or odd. In \cite{S}, Srinivasan gave a more simplified expression for the multiplicity in both cases. She proved

\begin{theorem}(Srinivasan) \label{thmS1} Let $I$ be Gorenstein of grade $s=2k+1$ and the minimal graded resolution of $S=R/I$ be as in $(\ref{eq1})$. Then,
\begin{align*}   \displaystyle\sum_{i=1}^{k} \displaystyle\sum_{j=1}^{b_i} (-1)^i a_{ij}^t(c-a_{ij})^t(c-2a_{ij})& =0& \mbox{if} \hspace{0.25cm}1 \leq t <k\\
& = (-1)^k(2k+1)!e(S)&  \mbox{if} \hspace{0.25cm}t=k \\
&=-c &\mbox{if} \hspace{0.25cm}t=0
\end{align*} 
\end{theorem}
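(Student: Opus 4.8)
The plan is to derive the whole statement from the Peskine--Szpiro power-sum relations together with the self-duality of the Gorenstein resolution $(\ref{eq1})$, and then to reorganize the resulting identities by means of Lemma \ref{lemma0}. First I would record Peskine--Szpiro in the form $\sum_{i=1}^{s}(-1)^i\sum_j d_{ij}^r = 0$ for $1\le r<s$ and $=(-1)^s s!\,e$ for $r=s$ (the $i=0$ term is irrelevant once $r\ge 1$). Reading the shifts off $(\ref{eq1})$: homological degree $i$ with $1\le i\le k$ contributes the shifts $a_{ij}$, its dual degree $s-i$ contributes $c-a_{ij}$, and the top degree $s$ contributes $c$. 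Since $s=2k+1$ is odd we have $(-1)^{s-i}=-(-1)^i$ and $(-1)^s=-1$, so the power sum collapses to
$$g(r):=\sum_{i=1}^{s}(-1)^i\sum_j d_{ij}^r=\sum_{i=1}^{k}(-1)^i\sum_j\bigl[a_{ij}^r-(c-a_{ij})^r\bigr]-c^r.$$

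Next I would apply the first identity of Lemma \ref{lemma0} with $n=r$ and $a=a_{ij}$, namely $(c-a)^r-a^r=\sum_{t=0}^{[r/2]}(-1)^t\binom{r-t-1}{t}\alpha^t(c-2a)c^{r-2t-1}$ with $\alpha=a(c-a)=\alpha_{ij}$. Writing $E_t=\sum_{i=1}^{k}(-1)^i\sum_j\alpha_{ij}^t(c-2a_{ij})$ for the quantity the theorem concerns (note $\alpha_{ij}^t=a_{ij}^t(c-a_{ij})^t$, so $E_t$ is exactly the asserted left-hand side), substitution gives
$$g(r)=-\sum_{t=0}^{[r/2]}(-1)^t\binom{r-t-1}{t}c^{r-2t-1}E_t-c^r.$$
Equating $g(r)$ with the Peskine--Szpiro values converts the claim into the triangular linear system $\sum_{t=0}^{[r/2]}(-1)^t\binom{r-t-1}{t}c^{r-2t-1}E_t=-c^r$ for $1\le r\le 2k$, together with one further equation at $r=s$ that carries the multiplicity.

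Finally I would solve this system by induction on $r$. The decisive observation is the behaviour of the top coefficient $\binom{r-t-1}{t}$ at $t=[r/2]$: when $r=2u$ it equals $\binom{u-1}{u}=0$ (using the convention $\binom{n}{p}=0$ for $n<p$), whereas when $r=2u+1$ it equals $\binom{u}{u}=1$. Hence only the odd-indexed equations carry new information, and each isolates $E_u$ through its term $(-1)^u E_u$. The case $r=1$ gives $E_0=-c$; assuming $E_1=\cdots=E_{u-1}=0$, only the $t=0$ and $t=u$ terms of the equation at $r=2u+1$ survive, and they cancel to force $E_u=0$ for $1\le u\le k-1$; the top equation $r=s$ then reduces to $(-1)^k E_k=(2k+1)!\,e$, i.e. $E_k=(-1)^k(2k+1)!\,e$. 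I expect the main difficulty to be purely organizational rather than conceptual: carefully tracking the sign $(-1)^{s-i}$ produced by odd $s$, and confirming that the even-indexed equations are automatically consistent instead of over-determining the $E_t$, which is precisely what the vanishing of $\binom{u-1}{u}$ guarantees.
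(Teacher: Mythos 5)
Your proof is correct, and it is essentially the intended argument: this paper states Theorem \ref{thmS1} without proof, citing Srinivasan \cite{S}, where the identity is obtained exactly this way---Peskine--Szpiro power sums, the Gorenstein pairing of shifts $a_{ij} \leftrightarrow c-a_{ij}$ with the sign $(-1)^{s-i}=-(-1)^i$, substitution via Lemma \ref{lemma0}, and induction up the resulting triangular system, with the vanishing of $\binom{u-1}{u}$ making the even equations redundant. The same mechanism, run in reverse (using Theorem \ref{thmS1} to kill the middle terms $0<t<k$), is what drives this paper's own proofs of Theorems \ref{th1} and \ref{th2}, so your derivation is fully consistent with the paper's framework.
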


\begin{theorem}(Srinivasan) \label{thmS2} Let $I$ be Gorenstein of grade $s=2k$ and the minimal graded resolution of $S=R/I$ be as in $(\ref{eq2})$. Then,
\begin{align*}   \displaystyle\sum_{i=1}^{k} \displaystyle\sum_{j=1}^{b_i} (-1)^i a_{ij}^t(c-a_{ij})^t & =0& \mbox{if} \hspace{0.25cm}1 \leq t <k\\
& = (-1)^k\frac{(2k)!}{2}e(S)&  \mbox{if} \hspace{0.25cm}t=k\\
&=-1 &\mbox{if} \hspace{0.25cm} t=0
\end{align*} 
\end{theorem}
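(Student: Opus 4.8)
The plan is to deduce Theorem~\ref{thmS2} from the Peskine--Szpiro relations, exploiting the self-duality of the Gorenstein resolution (\ref{eq2}) to fold the power sums and then invoking the second identity of Lemma~\ref{lemma0} to rewrite everything in terms of $\alpha_{ij}=a_{ij}(c-a_{ij})$. Write $\mathrm{PS}(n)=\sum_{i=0}^{2k}(-1)^i\sum_j d_{ij}^{\,n}$, which by Peskine--Szpiro equals $0$ for $0\le n\le 2k-1$ and $(2k)!\,e(S)$ for $n=2k$ (the $i=0$ term contributes $0$ once $n\ge 1$). Because the resolution is self-dual we have $d_{2k-i,j}=c-d_{ij}$ and $(-1)^{2k-i}=(-1)^i$, so pairing homological degree $i$ with $2k-i$ folds the sum into
$$\mathrm{PS}(n)=c^n+\sum_{i=1}^{k-1}(-1)^i\sum_j\bigl[a_{ij}^{\,n}+(c-a_{ij})^n\bigr]+(-1)^k\sum_{j=1}^{r_k}\bigl[a_{kj}^{\,n}+(c-a_{kj})^n\bigr],$$
the isolated $c^n$ coming from the outermost pair ($d_{0,1}=0$, $d_{2k,1}=c$). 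The combination appearing is the \emph{sum} $a^n+(c-a)^n$, exactly the left-hand side of the second identity in Lemma~\ref{lemma0}; this is the structural reason the even-grade statement involves $a_{ij}^t(c-a_{ij})^t$ with no extra $(c-2a_{ij})$ factor, unlike the odd-grade case.

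Next I would substitute that identity, writing $a^n+(c-a)^n=\sum_{t=0}^{[n/2]}g_{n,t}\,\alpha^t c^{n-2t}$ with $\alpha=a(c-a)$ and $g_{n,t}=(-1)^t\bigl[\binom{n-t}{t}+\binom{n-t-1}{t-1}\bigr]$, the term $\binom{n-t-1}{t-1}$ being $0$ at $t=0$. Collecting powers of $c$ turns the folded sum into
$$\mathrm{PS}(n)=c^n+\sum_{t=0}^{[n/2]}g_{n,t}\,c^{n-2t}\,T(t),\qquad T(t)=\sum_{i=1}^{k}(-1)^i\sum_j\alpha_{ij}^t,$$
where $T(t)$ is the target sum (with the middle term read over $j=1,\dots,r_k$). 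Since $g_{n,0}=1$, the $t=0$ contribution is $c^nT(0)$, and combining it with the isolated $c^n$ gives a factor $c^n(1+T(0))$. The relation $\mathrm{PS}(1)=0$ then reads $c\,(1+T(0))=0$, yielding $T(0)=-1$ at once and annihilating the $c^n$ term for every subsequent $n$.

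Finally I would extract the remaining values by induction on $t$, reading the relations only at the \emph{even} indices $n=2,4,\dots,2k$. At $n=2t$ the power $c^{n-2t}=c^0$ isolates the top term, so once $T(1)=\dots=T(t-1)=0$ are known the equation collapses to $g_{2t,t}\,T(t)=\mathrm{PS}(2t)$. For $2t<2k$ the right-hand side is $0$, forcing $T(t)=0$; at $n=2k$ it reads $g_{2k,k}\,T(k)=(2k)!\,e(S)$, and since $g_{2k,k}=(-1)^k\bigl[\binom{k}{k}+\binom{k-1}{k-1}\bigr]=2(-1)^k$ this gives $T(k)=(-1)^k\tfrac{(2k)!}{2}e(S)$, as claimed. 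The main point to check carefully is that the leading coefficients $g_{2t,t}=2(-1)^t$ never vanish, which is precisely what makes the system triangular and the induction go through; the remaining work is the bookkeeping of the middle summand and of the $i=0$ pair, plus the routine verification that the odd-$n$ relations are automatically consistent once $T(0)=-1$ and the vanishing of the intermediate $T(t)$ have been established.
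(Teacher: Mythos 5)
Your proof is correct: the folding of the self-dual resolution (\ref{eq2}), the substitution from Lemma \ref{lemma0}, the extraction of $T(0)=-1$ from the $n=1$ relation, and the triangular induction on the even power sums — hinging on the nonvanishing of $g_{2t,t}=2(-1)^t$ and on reading the middle sum over $j\le b_k/2$, which is indeed the convention the theorem requires — all check out against the Peskine--Szpiro relations. The paper itself gives no proof of Theorem \ref{thmS2} (it is quoted from Srinivasan \cite{S}), but your argument is exactly the technique of that source and the one the paper reuses in proving Theorems \ref{th1} and \ref{th2}, so it is essentially the same approach.
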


We extend these results  to all coefficients and we show

\begin{theorem} \label{th1} Let $I$ be Gorenstein of grade $s=2k+1$ and the minimal resolution of $S=R/I$ be as in $(\ref {eq1})$. Then
 $(-1)^k (s+l)!e_l $ is equal to \\
 \\
$ \displaystyle \sum_{ 0 \leq r \leq l} (-1)^{l-r}\nu_{l-r} \displaystyle \sum _{t=0}^{[\frac{r}{2}]}(-1)^{t}  {k+r-t \choose k+t} c^{r-2t} \displaystyle \sum_{i=1}^k \displaystyle \sum_{j=1}^{b_i} (-1)^{i} a_{ij}^{k+t}(c-a_{ij})^{k+t}(c-2a_{ij})$
\end{theorem}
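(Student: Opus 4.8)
The plan is to feed the self-duality of the resolution (\ref{eq1}) into the general expression for the Hilbert coefficients from Theorem \ref{theorem1}, and then collapse the resulting power sums using Lemma \ref{lemma0} together with the vanishing statements of Theorem \ref{thmS1}. Write $s = 2k+1$, so that $(-1)^s = -1$, and set $D_r = \sum_{i=0}^{s}(-1)^i \sum_{j=1}^{b_i} d_{ij}^{s+r}$; then Theorem \ref{theorem1} reads $-(s+l)!\,e_l = \sum_{r=0}^l (-1)^{l-r}\nu_{l-r} D_r$, so the entire problem reduces to computing $D_r$.

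First I would rewrite $D_r$ using the symmetry in (\ref{eq1}). The homological position $i=0$ contributes $0$; the positions $1 \le i \le k$ contribute $\sum_{i=1}^k (-1)^i \sum_j a_{ij}^{s+r}$; the top module $R(-c)$ contributes $-c^{s+r}$; and the positions $k+1 \le i \le 2k$, whose shifts are $c - a_{s-i,j}$, reindex under $i' = s-i \in \{1,\dots,k\}$, where $(-1)^i = (-1)^{s-i'} = -(-1)^{i'}$, so they contribute $-\sum_{i'=1}^k (-1)^{i'}\sum_j (c-a_{i'j})^{s+r}$. Collecting these gives $D_r = -\sum_{i=1}^k (-1)^i \sum_j \left[(c-a_{ij})^{s+r} - a_{ij}^{s+r}\right] - c^{s+r}$.

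Next I would expand $(c-a)^{s+r} - a^{s+r}$ by the first identity of Lemma \ref{lemma0} with $n = s+r = 2k+1+r$, obtaining a sum over $t$ of $(-1)^t \binom{2k+r-t}{t} c^{2k+r-2t}\, a^t (c-a)^t (c-2a)$. Summing over $i,j$ against the signs $(-1)^i$ turns the $a$-dependent factor into $A_t := \sum_{i=1}^k (-1)^i \sum_j a_{ij}^t (c-a_{ij})^t (c-2a_{ij})$, which is precisely the quantity governed by Theorem \ref{thmS1}: $A_t = 0$ for $1 \le t < k$ and $A_0 = -c$. This is where the argument does its real work: only $t = 0$ and $t \ge k$ survive, the $t=0$ term yields $-c^{2k+r+1}$, and after the outer sign flip this cancels the stray $-c^{s+r}$ term, leaving $D_r = -\sum_{t \ge k}(-1)^t \binom{2k+r-t}{t} c^{2k+r-2t} A_t$.

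Finally I would reindex by $t = k + t'$, so that $\binom{2k+r-t}{t} = \binom{k+r-t'}{k+t'}$ and $c^{2k+r-2t} = c^{r-2t'}$, pulling out a global $(-1)^k$; the upper limit becomes $[(r+1)/2]$, but the convention $\binom{n}{p} = 0$ for $n < p$ kills the terms with $t' > r/2$, truncating the range to $0 \le t' \le [r/2]$. Substituting $D_r = -(-1)^k \sum_{t'=0}^{[r/2]} (-1)^{t'}\binom{k+r-t'}{k+t'} c^{r-2t'} A_{k+t'}$ into $-(s+l)!\,e_l = \sum_{r=0}^l (-1)^{l-r}\nu_{l-r}D_r$ and multiplying through by $-(-1)^k$ produces the stated formula once $A_{k+t}$ is written out. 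I expect the main obstacles to be purely organizational: the sign bookkeeping in the duality step and the cancellation of the $c$-power terms; once Theorem \ref{thmS1} is invoked to annihilate the intermediate $A_t$, the remaining manipulations are routine.
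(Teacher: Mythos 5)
Your proposal is correct and takes essentially the same route as the paper's own proof: both reduce via Theorem \ref{theorem1} to the power sums $\sum_{i=0}^{s}(-1)^i\sum_j d_{ij}^{s+r}$, rewrite them using the self-duality of the resolution (\ref{eq1}), expand $(c-a)^{s+r}-a^{s+r}$ by the first identity of Lemma \ref{lemma0}, invoke Theorem \ref{thmS1} so that only the terms $t=0$ and $t\geq k$ survive (the $t=0$ term cancelling $-c^{s+r}$), and finally reindex $t\mapsto k+t$. The only difference is expository: you make explicit two points the paper glosses over, namely the cancellation of the $c^{s+r}$ term against the $t=0$ contribution and the truncation of the upper limit from $[\frac{r+1}{2}]$ to $[\frac{r}{2}]$ via the convention ${n \choose p}=0$ for $n<p$.
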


\begin{proof}  Following the result of Theorem \ref{theorem1}, it suffices to show that
 $ \displaystyle\sum _{i=0}^{s}(-1)^i  \displaystyle\sum_{j=1}^{b_i} d_{ij}^{s+r}$ \\
 equals $-\displaystyle \sum_{i=1}^k(-1)^{k+i}  \displaystyle \sum_{j=1}^{b_i} \displaystyle \sum _{t = 0}^{[\frac{r}{2}]}(-1)^t {k+r-t \choose k+t} c^{r-2t}a_{ij}^{k+t}(c-a_{ij})^{k+t}(c-2a_{ij}).$ We have that $ \displaystyle\sum _{i=0}^{s}(-1)^i  \displaystyle\sum_{j=1}^{b_i} d_{ij}^{s+r} $
\begin{align*}&=-c^{s+r}+ \displaystyle \sum_{i=1}^k \displaystyle \sum_{j=1}^{b_j} a_{ij}^{s+r}(-1)^i+\displaystyle \sum_{i=1}^k \displaystyle \sum_{j=1}^{b_{j}} (-1)^{2k+1-i}(c-a_{ij})^{s+r}  \\
&= -c^{s+r} -\displaystyle \sum_{i,j}(-1)^i [(c-a_{ij})^{s+r}-a_{ij}^{s+r}] \hspace{2cm} \\
&=-c^{s+r} - \displaystyle \sum_{i,j}(-1)^i \displaystyle \sum _{t=0}^{[\frac{s+r}{2}]}(-1)^t {s+r-1-t \choose t } a_{ij}^t(c-a_{ij})^t(c-2a_{ij})c^{s+r-1-2t} \\
&\hspace{12cm} \mbox {by lemma}\hspace{0.1cm} \ref{lemma0}. \\
&=-c^{s+r} - \displaystyle \sum_{i,j}(-1)^i \displaystyle \sum _{t=0}^{k+[\frac{r+1}{2}]} (-1)^t  {2k+r-t \choose t} a_{ij}^t(c-a_{ij})^t(c-2a_{ij})c^{2k+r-2t}  \end{align*}
By theorem \ref{thmS1} the only remaining terms in the sum are $t =0$ and $t \geq k$, so  
\begin{align*}
\displaystyle\sum _{i=0}^{s}(-1)^i  \displaystyle\sum_{j=1}^{b_i} d_{ij}^{s+r}&=-\displaystyle \sum_{i,j}(-1)^i \displaystyle \sum _{t =k}^{k+[\frac{r+1}{2}]}(-1)^{t} {2k+r-t \choose t } a_{ij}^t(c-a_{ij})^t(c-2a_{ij})c^{2k+r-2t} \\
&=-\displaystyle \sum_{i,j}(-1)^i \displaystyle \sum _{t = 0}^{[\frac{r}{2}]}(-1)^{t+k} {k+r-t \choose k+t} a_{ij}^{k+t}(c-a_{ij})^{k+t}(c-2a_{ij})c^{r-2t} \end{align*}
\end{proof}

\begin{example}$ (-1)^k (s+1)!e_1= [-\nu_1+ {k+1\choose k} c] \displaystyle \sum_{i=1}^k \displaystyle \sum_{j=1}^{b_i} (-1)^{i} a_{ij}^{k}(c-a_{ij})^{k}(c-2a_{ij}).$\\
$ (-1)^k (s+2)!e_2= \left[\nu_2-\nu_1 {k+1 \choose k} c + {k+2 \choose k} c^2\right] \displaystyle \sum_{i=1}^k \displaystyle \sum_{j=1}^{b_i} (-1)^{i} a_{ij}^{k}(c-a_{ij})^{k}(c-2a_{ij})$\\

\hspace{6cm} $-  \displaystyle \sum_{i=1}^k \displaystyle \sum_{j=1}^{b_i} (-1)^{i} a_{ij}^{k+1}(c-a_{ij})^{k+1}(c-2a_{ij}).$
\end{example}

We now consider the case when $s$ is even.

\begin{theorem} \label{th2} Let $I$ be Gorenstein of grade $s=2k$ and the minimal resolution of $S=R/I$ be as in $(\ref {eq1})$. Then $(-1)^k (s+l)!e_l $ is equal to\\ 
\\ 
$ \displaystyle \sum_{\tiny {\begin{array}{cc}t=0 \\ 0 \leq r \leq l \end{array}}} ^{[\frac{r}{2}]} \normalsize (-1)^{l-r+t} \nu_{l-r}\left[ {k+r-t \choose k+t} +{k+r-t -1\choose k+t-1} \right]c^{r-2t} \displaystyle \sum_{i=1}^k \displaystyle \sum_{j=1}^{b_i} (-1)^{i} a_{ij}^{k+t}(c-a_{ij})^{k+t} $
In the summation $j$ runs from $1$ to $b_i$ if $i < k$ and from $1$ to $b_k/2$ if $i=k$.
\end{theorem}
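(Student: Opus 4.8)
The plan is to mirror the proof of Theorem \ref{th1}, exploiting that for even grade the self-duality of the resolution (\ref{eq2}) produces a \emph{sum} of dual powers rather than a difference, so that the second identity of Lemma \ref{lemma0} replaces the first. By Theorem \ref{theorem1} it suffices to evaluate the power sum $\sum_{i=0}^{s}(-1)^i\sum_{j=1}^{b_i} d_{ij}^{s+r}$ for $0\le r\le l$ and then assemble $e_l$ via the $\nu_{l-r}$ coefficients. Since $s=2k$ is even, $(-1)^s=1$, so I will carry the computation with the plain sign and multiply through by $(-1)^k$ only at the very end to match the stated normalization.

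First I would split the alternating power sum according to the duality $d_{ij}=c-d_{s-i,j}$. The bottom term $i=0$ (the free module $R$) contributes $0^{s+r}=0$, while the top term $i=s=2k$ (namely $R(-c)$) contributes $(-1)^{2k}c^{s+r}=c^{s+r}$. Pairing position $i$ with position $s-i$ for $1\le i\le k-1$ and reindexing, the even parity of $s$ makes the dual sign $(-1)^{s-i}=(-1)^i$, so these positions combine to $(-1)^i\big[a_{ij}^{s+r}+(c-a_{ij})^{s+r}\big]$, and the central self-dual term at $i=k$ (with $j$ running to $b_k/2$) contributes $(-1)^k\big[a_{kj}^{s+r}+(c-a_{kj})^{s+r}\big]$ directly. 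Hence
\[
\sum_{i=0}^{s}(-1)^i\sum_{j}d_{ij}^{s+r}
= c^{s+r}+\sum_{i=1}^{k}\sum_{j}(-1)^i\big[a_{ij}^{s+r}+(c-a_{ij})^{s+r}\big],
\]
with the convention on $j$ stated in the theorem. This is the structural heart of the argument and the one place where the even case genuinely differs from the odd case.

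Next I would apply the second identity of Lemma \ref{lemma0} with $n=s+r=2k+r$, writing each $a_{ij}^{s+r}+(c-a_{ij})^{s+r}$ as $\sum_{t=0}^{k+[r/2]}(-1)^t\big[\binom{2k+r-t}{t}+\binom{2k+r-t-1}{t-1}\big]a_{ij}^t(c-a_{ij})^t c^{2k+r-2t}$, where the two binomial sums of the lemma have been merged (the $t=0$ part of the second sum vanishing by the convention $\binom{2k+r-1}{-1}=0$). Now Theorem \ref{thmS2} performs the key simplification: the inner sum $\sum_{i,j}(-1)^i a_{ij}^t(c-a_{ij})^t$ is $0$ for $1\le t<k$, so all those terms drop, and its value $-1$ at $t=0$ makes the $t=0$ contribution equal to $-c^{s+r}$, which exactly cancels the isolated $c^{s+r}$ left over from the top of the resolution. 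Only the indices $t\ge k$ survive.

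Finally I would reindex $t\mapsto k+t$, so the surviving range becomes $0\le t\le[r/2]$, the binomial pair becomes $\binom{k+r-t}{k+t}+\binom{k+r-t-1}{k+t-1}$, the power of $c$ becomes $c^{r-2t}$, and an overall sign $(-1)^k$ factors out of $(-1)^{k+t}$. Substituting this expression for $\sum_{i=0}^{s}(-1)^i\sum_j d_{ij}^{s+r}$ back into Theorem \ref{theorem1} and multiplying by $(-1)^k$ yields precisely the stated formula, with the combined sign $(-1)^{l-r+t}$. The main obstacle throughout is the sign and index bookkeeping: specifically verifying that even parity converts the dual difference into a sum, and confirming the clean cancellation of $c^{s+r}$ against the $t=0$ term supplied by Theorem \ref{thmS2}. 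The remaining binomial manipulations are routine given Lemma \ref{lemma0}.
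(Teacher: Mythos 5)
Your proposal is correct and follows essentially the same route as the paper's own proof: the paper likewise reduces to Theorem \ref{theorem1}, uses the duality of resolution (\ref{eq2}) to write the alternating power sum as $c^{s+r}+\sum_{i,j}(-1)^i\bigl[a_{ij}^{s+r}+(c-a_{ij})^{s+r}\bigr]$, applies the second identity of Lemma \ref{lemma0}, invokes Theorem \ref{thmS2} to eliminate the terms $1\le t<k$ (with the $t=0$ term cancelling $c^{s+r}$), and reindexes $t\mapsto k+t$. Your write-up is in fact slightly more explicit than the paper's on the cancellation of $c^{s+r}$ and on the final sign bookkeeping, but there is no substantive difference in method.
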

\begin{proof} We proceed the same way as the proof of theorem \ref{th1}. 
We have 
\begin{align*} \displaystyle\sum _{i=0}^{s}(-1)^i  \displaystyle\sum_{j=1}^{b_i} d_{ij}^{s+r}&= c^{s+r}+ \displaystyle \sum_{i=1}^k \displaystyle \sum_{j=1}^{b_j} a_{ij}^{s+r}(-1)^i+ \displaystyle \sum (-1)^{2k-i}(c-a_{ij})^{s+r}\\
& =c^{s+r} +\displaystyle \sum_{i,j}(-1)^i \left[(c-a_{ij})^{s+r}+a_{ij}^{s+r}\right]. \\
&=c^{2k+r} + \displaystyle \sum_{i,j}(-1)^i \left[\displaystyle \sum_{t=0}^{k+[\frac{r}{2}]} (-1)^t {2k+r-t \choose t} a_{ij}^t(c-a_{ij})^tc^{2k+r-2t} \right. \\
& \hspace{3cm} \left. +\displaystyle \sum_{t=1}^{k+[\frac{r}{2}]} (-1)^t { 2k+r-1-t \choose t-1} a_{ij}^t(c-a_{ij})^tc^{2k+r-2t}\right] \\
&\hspace{9.5cm} \mbox{by lemma \ref{lemma0}.}  \end{align*}

By theorem \ref{thmS2} the only remaining terms in the sum are $t =0$ and $t \geq k$, we obtain that $ \displaystyle\sum _{i=0}^{s}(-1)^i  \displaystyle\sum_{j=1}^{b_i} d_{ij}^{s+r}=$\\  
 $\displaystyle \sum_{i,j}(-1)^i \left[ \displaystyle \sum_{t =0}^{[\frac{r}{2}]} (-1)^{t+k} {k+r-t \choose t+k} a_{ij}^{t+k}(c-a_{ij})^{t+k}c^{r-2t} \right. \\$

\hspace{5cm}  $\left. +\displaystyle \sum_{t = 0}^{[\frac{r}{2}]}(-1)^{t+k} {k+r-1-t \choose t+k-1} a_{ij}^{t+k}(c-a_{ij})^{t+k}c^{r-2t} \right]$. \\ 

 \end{proof}
 
 \begin{example}
 
 $ (-1)^k (s+1)!e_1= \left[ -\nu_1\left( {k\choose k} + {k-1\choose k-1} \right)+ \left(  {k+1\choose k}+{k\choose k-1} \right)c\right].$\\
 
\hspace{9cm} $\displaystyle \sum_{i=1}^k \displaystyle \sum_{j=1}^{b_i} (-1)^{i} a_{ij}^{k}(c-a_{ij})^{k}$\\
\\
\\
$ (-1)^k (s+2)!e_2=\left[\nu_2\left( {k\choose k}+{k-1\choose k-1} \right)-\nu_1 \left( {k+1\choose k}+{ k\choose k-1}\right)c  \right. + \left. \left( {k+2\choose k}+{ k+1\choose k-1} \right)c^2\right].$

\hspace{1cm}$\displaystyle \sum_{i=1}^k \displaystyle \sum_{j=1}^{b_i} (-1)^{i} a_{ij}^{k}(c-a_{ij})^{k} -\left[ {k+1\choose k+1}+ {k\choose k} \right] \displaystyle \sum_{i=1}^k \displaystyle \sum_{j=1}^{b_i} (-1)^{i} a_{ij}^{k+1}(c-a_{ij})^{k+1}$
\end{example}

\section{Bounds for the coefficients with quasi-pure resolutions.}

 \begin{definition} For any ordered $s$-tuple of positive integers, 
$$f_l(y_{1 }, \ldots , y_{s }) = 
 \displaystyle \sum_{1 \leq i_1 \leq \ldots  i_l \leq s}(\displaystyle \prod_{t=1}^l y_ {i_t }-(i_{t}+t-1)) , 1\leq l \leq s$$
 and $f_0 = 1$. 
 \end{definition}  
In this section, we prove 

\begin{theorem} \label{main}  If $S$ is a homogeneous Gorenstein Algebra with quasi-pure resolution of length $s = 2k$ or $2k+1$.  Then
\vskip 1cm
\noindent \small  $f_{l}(m_1 \ldots  m_kM_{k+1} \ldots  M_s) \frac{m_1 \ldots  m_kM_{k+1} \ldots  M_s }{(s+l)!}\leq e_l(S) 
\leq f_{l}(M_1 \ldots  M_km_{k+1} \ldots  m_s)\frac{M_1 \ldots  M_km_{k+1} \ldots  m_s}{(s+l)!}$
\end{theorem}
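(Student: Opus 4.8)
The plan is to combine the explicit moment formulas of Theorems \ref{th1} and \ref{th2} with the sign analysis that already underlies Srinivasan's multiplicity bound, which is exactly the case $l=0$ of the present statement. Writing $\alpha_{ij}=a_{ij}(c-a_{ij})$ as in the preliminaries, Theorems \ref{th1}/\ref{th2} express $(-1)^k(s+l)!e_l$ as one fixed linear combination (with coefficients built from the $\nu$'s, the binomial coefficients, and powers of $c$) of the weighted power sums $B_t=\sum_{i,j}(-1)^i\alpha_{ij}^{k+t}\gamma_{ij}$, where $\gamma_{ij}=c-2a_{ij}$ in the odd case and $\gamma_{ij}=1$ in the even case, for $0\le t\le[l/2]$. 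Thus it suffices to bound each $(-1)^kB_t$ from above and below by the corresponding symmetric expressions in the $p_i$ and $P_i$, and then to check that the resulting combinations collapse to the claimed closed forms $f_l(\cdot)$.

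For the core estimate I would first record that quasi-purity forces the nested ordering $p_1\le P_1\le p_2\le P_2\le\cdots\le p_k\le P_k$ of the intervals $[p_i,P_i]\ni\alpha_{ij}$: indeed $m_i\ge M_{i-1}$ together with $a_{ij}\le c/2$ (which holds on the first half of a Gorenstein resolution, since the shifts strictly increase across the middle) and the monotonicity of $x\mapsto x(c-x)$ on $[0,c/2]$ give $P_{i-1}=M_{i-1}(c-M_{i-1})\le m_i(c-m_i)=p_i$. The key device is a test polynomial: for parameters $\lambda_1,\dots,\lambda_k$ one expands $\sum_{i,j}(-1)^{k+i}\gamma_{ij}\,\alpha_{ij}^{t}\prod_{l=1}^{k}(\alpha_{ij}-\lambda_l)$ and uses Srinivasan's vanishing (Theorems \ref{thmS1} and \ref{thmS2}, which say every weighted power sum of degree $t$ with $1\le t<k$ vanishes) to kill the low-order moments. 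Taking $\lambda_l=p_l$ makes, level by level, the product $\prod_l(\alpha_{ij}-p_l)$ carry exactly $k-i$ negative factors, so $(-1)^{k+i}\prod_l(\alpha_{ij}-p_l)\ge0$ and, since $\gamma_{ij}>0$, the whole sum is $\ge0$; taking $\lambda_l=P_l$ reverses the sign. This is precisely the mechanism that yields Srinivasan's bound when $l=0$, and the nonnegative Vandermonde determinants $V_t$ of the Definition are what package the higher-order versions of these sign-definite sums.

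At the level of assembly I would substitute the resulting upper and lower estimates for the $B_t$ into the formulas of Theorems \ref{th1}/\ref{th2} and simplify. The lower substitution replaces each level-$i$ contribution by its value at $\alpha=p_i=m_i(c-m_i)$, i.e.\ by the degenerate Gorenstein configuration with shift vector $(m_1,\dots,m_k,M_{k+1},\dots,M_s)$, and the upper substitution by the configuration with shift vector $(M_1,\dots,M_k,m_{k+1},\dots,m_s)$; one then recognizes, via the binomial identities of Lemma \ref{lemma0}, that the combination of power sums of the $p_i$ (resp.\ $P_i$) is exactly $f_l(m_1\dots m_kM_{k+1}\dots M_s)\,m_1\dots m_kM_{k+1}\dots M_s$ (resp.\ the $M/m$ version). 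A useful sanity check here is that each closed form is itself the value of $(s+l)!e_l$ for the pure Gorenstein resolution with the indicated self-dual shifts, so the statement reads as ``$e_l$ of $S$ lies between $e_l$ of its two bounding pure resolutions.''

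The main obstacle I anticipate is twofold. First, unlike the multiplicity case, a single test polynomial $\alpha^{t}\prod_l(\alpha-\lambda_l)$ does not isolate the single power sum $B_t$; it mixes $B_0,\dots,B_t$, so one must either bound the entire $e_l$-combination at once by choosing a single degree-$(k+[l/2])$ polynomial with sign-definite behaviour, or run an induction on $t$ that peels off one moment at a time. Second, and more laborious, is the final combinatorial collapse: verifying that the symmetric-function expression produced by the Vandermonde/sign argument equals the compact $f_l$. This matching of the two closed forms, rather than the positivity itself, is where essentially all of the computation will go, and it is the step that must be organized carefully, presumably by the same manipulations used to pass between the two lines of Lemma \ref{lemma0}.
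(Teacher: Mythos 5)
Your setup is right: you start from the moment formulas of Theorems \ref{th1}/\ref{th2}, and you correctly reconstruct the sign mechanism behind Srinivasan's $l=0$ bound (the nested intervals $p_1\le P_1\le p_2\le\dots\le p_k\le P_k$ and the test polynomial $\prod_l(\alpha-\lambda_l)$ evaluated against Srinivasan's vanishing). But the two issues you flag as ``obstacles'' are not residual technicalities --- they are the entire content of the proof, and your proposal contains no working method for either. The reduction you announce at the outset (``it suffices to bound each $(-1)^kB_t$'') is false as stated: in the formula for $(-1)^k(s+l)!e_l$ the moments $B_t$ enter with coefficients of alternating sign (the factors $(-1)^t$ and $(-1)^{l-r}\nu_{l-r}$), so termwise bounds on the individual $B_t$ do not combine into a bound on $e_l$; you concede this later, but your two fallbacks (a single sign-definite polynomial of degree $k+[r/2]$, or an induction peeling off one moment at a time) are never carried out, and neither visibly produces the exact closed form $f_l(\cdot)\prod_i p_i$. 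The paper resolves this by a different mechanism: it never bounds moments at all. It forms the determinants $M_t$ (odd case) and $N_t$ (even case) whose entries are the weighted power sums, expands the relevant linear combination of them by multilinearity in the columns into $\sum_{j_1,\dots,j_k}\prod_i\alpha_{ij_i}\gamma_{ij_i}\,V(\alpha_{1j_1},\dots,\alpha_{kj_k})\,f_l(d_{1j_1},\dots,d_{sj_s})$, and gets positivity \emph{termwise in this expansion}: $V\ge 0$ by the ordering, $\gamma_{ij}=c-2a_{ij}\ge0$ by Lemma \ref{lemS}, and --- the key ingredient your proposal never identifies --- $f_l(d_{1j_1},\dots,d_{sj_s})\ge0$ for every tuple of actual shifts of a quasi-pure resolution (Remark \ref{f_i}). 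The two bounds then come from $p_i\le\alpha_{ij_i}\le P_i$ together with monotonicity of $f_l$, and $e_l$ is reconnected to the same determinants by column operations using Theorems \ref{thmS1}/\ref{thmS2} and \ref{th1}/\ref{th2}, giving that the expanded sum equals $(s+l)!\,e_l\det L$ with $\det L>0$.

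The second gap is the ``combinatorial collapse,'' which you defer to Lemma \ref{lemma0}; that lemma is the wrong tool (it is what proves Theorems \ref{th1}/\ref{th2} in the first place). The identification is done in the paper by the generating-function identities of Lemmas \ref{lem4.1} and \ref{lem4.2} together with Lemma \ref{lem4.6} and Remark \ref{rem4.6}: the inner sum $\sum_{t}(-1)^t\binom{k+r-t}{k+t}c^{r-2t}\sum_{\sum\beta_i=t}\prod_i\alpha_i^{\beta_i}$ is shown to equal the complete homogeneous expression $\sum_{\beta_1+\dots+\beta_{2k+1}=r}\prod_i a_i^{\beta_i}(c-a_i)^{\beta_{k+i}}c^{\beta_{2k+1}}$, and summing against $(-1)^{l-r}\nu_{l-r}$ turns this into exactly $f_l$ of the shift tuple. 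Finally, your sketch ignores the even case $s=2k$ entirely, where the middle of the resolution contributes both $a_{kj}$ and $c-a_{kj}$, the expansion yields a \emph{sum of two} $f_l$'s, and the endgame requires the extra comparisons $f_l(m_1,\dots,m_k,M_{k+1},\dots,M_s)\le f_l(m_1,\dots,m_{k-1},M_k,\dots,M_s)$ and $2m_k\le c=m_{2k}$, $2M_k\ge c=M_{2k}$ to absorb the resulting factor of $2$. Without these three pieces --- the termwise-positive determinant expansion, the positivity of $f_l$ at quasi-pure shifts, and the generating-function identification --- the proposal is a plan, not a proof.
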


\begin{remark} \begin{enumerate} \item $M_n=m_n=c$.
\item These bounds are strictly stronger than the bounds in the conjecture found by Herzog and Zheng in \cite{HZ}. \end{enumerate} 
\end{remark}
Let   $S=R/I$ . Then $S$ has a $R-$ free resolution of length $s= 2k+1$ or $s=2k$ .  The first half of the resolution is given below.
\begin{equation} \label{eq3} \displaystyle\sum_{j=1}^{b_k}R(-a_{kj})  \rightarrow  \ldots \rightarrow \displaystyle\sum_{j=1}^{b_1}R(-a_{1j}) \rightarrow R 
, \hspace {1cm}
 s =2k+1  \end{equation}
and 
\begin{equation}\label{eq4} \displaystyle\sum_{j=1}^{b_k/2=r_k}R(-(c-a_{kj})) \oplus \displaystyle\sum_{j=1}^{b_k/2=r_k}R(-a_{kj})  \rightarrow  \ldots \rightarrow \displaystyle\sum_{j=1}^{b_1}R(-a_{1j}) \rightarrow R, \hspace {1cm} s=2k\end{equation}

Thus, we let $r_i = b_i, i\neq k$ and $r_k = b_k$ if $s$ is odd and $r_k = \frac {b_k}{2}$ if $s$ is even. 

Without loss of generality we may take $a_{i1} \leq a_{i2} \leq \ldots  a_{ib_i}$ for all $i$.
\\
If $s=2k$, we pick $a_{kj}$ so that $c-a_{kr_k} \geq a_{kr_k}$. The symmetry of the resolution and the exactness criterion forces $b_k$ to be even.
Srinivasan showed in [\cite{S}, 5], that
\begin{lemma}\label{lemS} If $S$ is Gorenstein with a quasi-pure resolution then $c \geq 2a_{ij}$ for all $i,j$.
\end{lemma}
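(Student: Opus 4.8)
\textbf{Proof plan for Lemma~\ref{lemS}.}

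The plan is to exploit the symmetry of the Gorenstein resolution together with the quasi-purity hypothesis to pin down where the shift $a_{ij}$ and its dual partner $c - a_{ij}$ sit relative to one another. First I would recall the key structural fact from the self-duality: in the resolution \eqref{eq1} (or \eqref{eq2}), the shift $a_{ij}$ appearing in homological position $i$ is matched with the dual shift $c - a_{ij}$ appearing in position $s - i$. Thus the two numbers $a_{ij}$ and $c - a_{ij}$ occur as actual shifts in the resolution, in homological degrees $i$ and $s-i$ respectively, with $i \le k \le s-i$.

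The core of the argument is then a comparison across the ``middle'' of the resolution using quasi-purity, which says $m_t \ge M_{t-1}$ for every $t$, i.e.\ every shift in homological degree $t$ is at least every shift in degree $t-1$. Chaining these inequalities from degree $i$ up to degree $s-i$, I would show that the minimal shift in degree $s-i$ dominates the maximal shift in degree $i$; in particular $c - a_{ij} \ge a_{ij}$, since $a_{ij}$ is a shift in degree $i$ and $c - a_{ij}$ is a shift in the higher degree $s-i$. Concretely, $a_{ij} \le M_i \le m_{i+1} \le M_{i+1} \le \cdots \le m_{s-i} \le c - a_{ij}$, where the inner inequalities are successive applications of quasi-purity and the outer ones use that $a_{ij}$ (resp.\ $c-a_{ij}$) is among the shifts in degree $i$ (resp.\ $s-i$). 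This immediately yields $c \ge 2a_{ij}$. For the case $i = k$ with $s$ even, the two partners $a_{kj}$ and $c - a_{kj}$ both sit in the single middle homological degree $k$, so there is no chain to run; here I would invoke the normalization already fixed in the setup, namely that $a_{kj}$ is chosen so that $c - a_{kr_k} \ge a_{kr_k}$, combined with the ordering $a_{k1} \le \cdots \le a_{kr_k}$, to conclude $c - a_{kj} \ge a_{kj}$ for all $j \le r_k$ and hence $c \ge 2a_{kj}$.

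The main obstacle I anticipate is handling the boundary/middle terms cleanly, since the telescoping quasi-purity chain has genuine content only when $i < s - i$, i.e.\ strictly away from the center. The odd-length case $s = 2k+1$ is comfortable because every $a_{ij}$ with $i \le k$ has its dual strictly higher up ($s - i \ge k+1 > i$), so the chain always has at least one link. The even-length case $s = 2k$ requires the separate middle-degree argument described above, and I would want to verify that the chosen ordering and the forced evenness of $b_k$ make the normalization $c - a_{kr_k} \ge a_{kr_k}$ genuinely available for the largest middle shift, from which the smaller ones follow by monotonicity. Once both cases are covered, the inequality $c \ge 2 a_{ij}$ holds for all $i \le k$ and all $j$, and by the symmetry $a_{ij} \leftrightarrow c - a_{ij}$ it then holds for the dual shifts as well, completing the proof.
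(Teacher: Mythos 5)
Your argument is correct and is essentially the paper's own proof: the paper merely compresses your telescoping chain by observing that quasi-purity makes the $a_{ij}$ increase with $i$, so only $c \geq 2a_{kr_k}$ needs checking, and this follows from the single middle link $c - a_{kr_k} = m_{k+1} \geq M_k = a_{kr_k}$ when $s = 2k+1$, and from the normalization $c - a_{kr_k} \geq a_{kr_k}$ when $s = 2k$, exactly as in your middle-degree case. One small correction: delete your closing claim that the bound transfers ``to the dual shifts as well'' --- read literally, $c \geq 2(c - a_{ij})$ is false unless $c = 2a_{ij}$, and the lemma concerns only the lower-half shifts $a_{ij}$ with $i \leq k$, so that remark is both unnecessary and, as stated, wrong.
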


\begin{proof} Since quasi-purity means the $a_{ij}$ increase  with $i$, we just need to check \mbox{$c \geq 2a_{kr_k}$}.
If $s=2k+1$ then $c-a_{kr_k}=m_{k+1} \geq M_k =a_{kr_k}$. So $c \geq 2a_{kr_k}$. If $s=2k$, then $c \geq 2a_{kr_k}$ by choice and hence the result.
\end{proof}

To be able to prove theorem \ref{main} we need to consider two different determinants depending  on whether $s$ is even or odd. \\
Suppose $s$ is odd. Let
$$M_{t}= \left| \begin{array}{cccccc}
\displaystyle \sum_{j=1}^{b_1}\alpha_{1j}(c-2a_{1j}) \ldots  &\displaystyle \sum_{j=1}^{b_i}\alpha_{ij}(c-2a_{ij}) \ldots   & \displaystyle\sum_{j=1}^{b_k}\alpha_{kj}(c-2a_{kj})\\
\displaystyle \sum_{j=1}^{b_1}\alpha_{1j}^2(c-2a_{1j}) \ldots  &\displaystyle \sum_{j=1}^{b_i}\alpha_{ij}^2(c-2a_{ij}) \ldots   & \displaystyle\sum_{j=1}^{b_k}\alpha_{kj}^2(c-2a_{kj})\\

\vdots &\vdots& \vdots& \\

\displaystyle \sum_{j=1}^{b_1}\alpha_{1j}^{k-1}(c-2a_{1j}) \ldots & \displaystyle \sum_{j=1}^{b_i}\alpha_{ij}^{k-1}(c-2a_{ij}) \ldots   & \displaystyle\sum_{j=1}^{b_k}\alpha_{kj}^{k-1}(c-2a_{kj})\\
\displaystyle \sum_{j=1}^{b_1}\alpha_{1j}^{k+t}(c-2a_{1j}) \ldots & \displaystyle \sum_{j=1}^{b_i}\alpha_{ij}^{k+t}(c-2a_{ij}) \ldots   & \displaystyle\sum_{j=1}^{b_k}\alpha_{kj}^{k+t}(c-2a_{kj})\\
\end{array} \right|$$
where $\alpha_{ij} = a_{ij}(c-a_{ij})$. Then, $M_{t} = \displaystyle \sum_{1 \leq j_i \leq b_i} \displaystyle \prod_{i=1}^k \alpha_{ij_i}(c-2a_{ij_i}).V_t(\alpha_{1j_1}, \ldots \alpha_{kj_k}).$\\
Now consider $$\displaystyle \sum_{r=0}^l(-1)^{l-r}\nu_{l-r} \displaystyle \sum _{t=0}^{[r/2]}(-1)^{t}  { k+r-t \choose k+t} c^{r-2t} M_t $$\\
It is equal to \\
\\
$ \displaystyle \sum_{1 \leq j_i \leq b_i} \displaystyle \prod_{i=1}^k \alpha_{ij_i}(c-2a_{ij_i}) V(\alpha_{1j_1} \ldots \alpha_{kj_k}) .$

\hspace{3cm} $\displaystyle \sum_{r=0}^l (-1)^{l-r}\nu_{l-r} \displaystyle \sum _{t=0}^{[r/2]}(-1)^{t} {k+r-t \choose k+t} c^{r-2t}  \displaystyle \sum_{ \sum \beta_i=t} \prod_{i=1}^k \alpha_{ij_i}^{\beta_i}$ \\
\\
We thank L\'{a}szl\'{o} Sz\'{e}kely for his help with the following lemma.

\begin{lemma} \label{lem4.1} We have for all $c,a_i >0$ and $\alpha_i=a_i(c-a_i)$ \\
$\displaystyle \sum _{t=0}^{[r/2]}(-1)^{t} {k+r-t \choose k+t} c^{r-2t}\displaystyle \sum_{ \sum \beta_i=t}\prod_{i=1}^k \alpha_{i}^{\beta_i}= \displaystyle \sum_{\beta_1+ \dots + \beta_{2k+1}=r }  \prod_{i=1}^k a_i^{\beta_i}(c-a_i)^{\beta_{k+i}}c^{\beta_{2k+1}}$
\end{lemma}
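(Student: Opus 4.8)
The plan is to recognize both sides as the degree-$r$ coefficients of two formal power series in a variable $z$ and to show that these power series coincide. The key observation is that the innermost sum on the left, $\sum_{\sum \beta_i = t} \prod_{i=1}^k \alpha_i^{\beta_i}$, is the complete homogeneous symmetric polynomial $h_t(\alpha_1, \dots, \alpha_k)$ of degree $t$ in the $k$ variables $\alpha_i$, while the right-hand side is precisely $h_r(a_1, \dots, a_k,\, c - a_1, \dots, c - a_k,\, c)$, the complete homogeneous symmetric polynomial of degree $r$ in the listed $2k+1$ quantities. Using the standard generating function $\sum_{r \geq 0} h_r(x_1, \dots, x_n) z^r = \prod_i (1 - x_i z)^{-1}$ together with the factorization $(1 - a_i z)(1 - (c - a_i) z) = 1 - cz + \alpha_i z^2$ (the roots sum to $c$ and multiply to $\alpha_i$), the generating function of the right-hand side becomes
$$\frac{1}{1 - cz} \prod_{i=1}^k \frac{1}{1 - cz + \alpha_i z^2}.$$

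First I would assemble the generating function of the left-hand side. Writing $r = 2t + s$ with $s = r - 2t \geq 0$, the binomial coefficient becomes $\binom{k + r - t}{k + t} = \binom{k + t + s}{s}$ and the power of $c$ becomes $c^{s}$, so that the two summation indices decouple and the generating function factors as
$$\sum_{t \geq 0} (-1)^t h_t(\alpha_1,\dots,\alpha_k)\, z^{2t} \sum_{s \geq 0} \binom{k + t + s}{s} (cz)^s.$$
The inner sum is evaluated by the negative binomial series $\sum_{s \geq 0} \binom{m + s}{s} x^s = (1 - x)^{-(m+1)}$ with $m = k + t$, giving $(1 - cz)^{-(k + t + 1)}$.

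Next I would pull out the factor $(1 - cz)^{-(k+1)}$ and rewrite the remaining sum over $t$ as $\sum_{t \geq 0} h_t(\alpha_1,\dots,\alpha_k)\, w^t$ with $w = -z^2 / (1 - cz)$. By the generating function for $h_t$ this equals $\prod_{i=1}^k (1 - \alpha_i w)^{-1} = \prod_{i=1}^k (1 - cz)\,(1 - cz + \alpha_i z^2)^{-1}$. Cancelling the factor $(1 - cz)^{k}$ against $(1 - cz)^{-(k+1)}$ collapses the left-hand generating function to exactly the expression displayed above for the right-hand side. Comparing coefficients of $z^r$ then yields the claimed identity, and since both sides are polynomials in $a_1, \dots, a_k, c$, this formal power series argument is fully rigorous.

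The main obstacle is purely the bookkeeping in the middle step: one must reindex the double sum via $r = 2t + s$ so that the variables genuinely separate, confirm that the negative binomial series is applied with the correct exponent $k+t+1$, and track the powers of $(1-cz)$ through the two applications of the symmetric-function generating function. Once both sides are brought to the common form $\tfrac{1}{1-cz}\prod_i (1 - cz + \alpha_i z^2)^{-1}$, the conclusion is immediate.
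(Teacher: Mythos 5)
Your proof is correct. Both your argument and the paper's rest on the same key factorization $(1-a_iz)\bigl(1-(c-a_i)z\bigr)=1-cz+\alpha_i z^2$, so both identify the right-hand side as the coefficient of $z^r$ in $\frac{1}{1-cz}\prod_{i=1}^k(1-cz+\alpha_i z^2)^{-1}$; from there, however, the two proofs run in opposite directions. The paper expands this product: each factor $\bigl(1-(cz-\alpha_i z^2)\bigr)^{-1}$ is written as a geometric series in $cz-\alpha_i z^2$, each power is expanded binomially, and the coefficient of $z^r$ is extracted; matching the resulting triple sum against the left-hand side then requires the auxiliary identity $\sum_{\gamma_1+\cdots+\gamma_{k+1}=r-t}\prod_{i=1}^k\binom{\gamma_i}{\beta_i}=\binom{r-t+k}{t+k}$, which the paper proves separately, again via the negative binomial series. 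You instead resum the left-hand side over all $r$: the reindexing $r=2t+s$ decouples the two sums, the negative binomial series produces $(1-cz)^{-(k+t+1)}$, and the generating function $\sum_t h_t(\alpha_1,\dots,\alpha_k)w^t=\prod_i(1-\alpha_i w)^{-1}$ evaluated at $w=-z^2/(1-cz)$ collapses the whole expression to the same product form, with no coefficient-level bookkeeping; the substitution is legitimate as a formal power series operation since $w$ has order $2$ in $z$. What your route buys is precisely the elimination of the paper's auxiliary binomial-coefficient lemma, which gets absorbed into the symmetric-function generating function; what the paper's route buys is that it never needs to invoke complete homogeneous symmetric polynomials as such, using only geometric series and the binomial theorem at the cost of heavier index-tracking.
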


 \begin{proof} $\displaystyle \sum_{\beta_1+ \dots + \beta_{2k+1}=r }  \prod_{i=1}^k a_i^{\beta_i}(c-a_i)^{\beta_{k+i}}c^{\beta_{2k+1}}$ is the coefficient of $x^r$ in
 
 \begin{align*} {\frac {1}{1-cx}}\prod_{i=1}^k \frac{1}{1-a_ix} \frac{1}{1-(c-a_i)x}& =\frac{1}{1-cx} \prod_{i=1}^k \frac{1}{1-(cx-a_i(c-a_i)x^2)} \\
& =\displaystyle \sum_{\gamma_1,\ldots , \gamma_{k+1}} \prod_{i=1}^k \left( cx-a_i(c-a_i)x^2 \right) ^{\gamma_i} \left( cx \right) ^{\gamma_{k+1}} \end{align*}
 the coefficient of $x^r$ is in this last expression is:
 
$$ \displaystyle \sum _{\sum_{i=1}^k (2\beta_i+  \gamma_i -\beta_i)+  \gamma_{k+1} = r} \prod_{i=1}^k {\gamma_i \choose \beta_i} c^{\gamma_i -\beta_i}(-1)^{\beta_i}\left(a_i(c-a_i)\right)^{\beta_i} c^{\gamma_{k+1}} =$$

$$ \displaystyle \sum _{\sum_{i=1}^k ( \beta_i+  \gamma_i )+  \gamma_{k+1} = r} \prod_{i=1}^k {\gamma_i \choose \beta_i} c^{\gamma_i -\beta_i}(-1)^{\beta_i}\left(a_i(c-a_i)\right)^{\beta_i} c^{\gamma_{k+1}} =$$

$$ \displaystyle \sum_{t \geq 0}c^{r-2t}(-1)^t   \displaystyle \sum _{  \sum_{i=1}^{k+1}\gamma_i =r-t}  \displaystyle \sum_{\beta_1+ \ldots  \beta_k=t} \prod_{i=1}^k \left(a_i(c-a_i)\right)^{\beta_i}   {\gamma_i \choose \beta_i} =$$

$$\displaystyle \sum_{t \geq 0}  c^{r-2t}(-1)^t \displaystyle \sum_{\beta_1+ \ldots  \beta_k=t} \prod_{i=1}^k \left(a_i(c-a_i) \right)^{\beta_i} \displaystyle \sum _{  \sum_{i=1}^{k+1}\gamma_i =r-t}\displaystyle \prod_{i=1}^k {\gamma_i \choose \beta_i}.$$
It remains to show that,  $\displaystyle \sum _{  \sum_{i=1}^{k+1}\gamma_i =r-t}\displaystyle \prod_{i=1}^k { \gamma_i \choose \beta_i} = {r-t+k \choose t+k}$.   
This can be proved by induction on $n,t, k$. Alternatively,
 consider the negative binomial theorem $$(1-x)^{-(\beta_{i}+1)} = \displaystyle \sum_n {n + \beta_i \choose \beta_i} x^n.$$
$$x^{\beta_i}(1-x)^{-(\beta_{i}+1)} = \displaystyle \sum_n {n + \beta_i\choose \beta_i} x^{n+\beta_i}= \displaystyle \sum_{\gamma_i} { \gamma_i\choose \beta_i} x^{\gamma_i},$$
so that $\displaystyle \sum_{\gamma_1+\ldots  \gamma_{k+1}=r-t}\displaystyle \prod_{i=1}^k { \gamma_i \choose \beta_i}$ is the coefficient of $x^{r-t-\gamma_{k+1}}$ in \\
$$ \displaystyle \prod_{i=1}^k \frac{x^{\beta_i}}{(1-x)^{\beta_i+1}}=\frac{x^{t}}{(1-x)^{t+k}}.$$\\ This in turn, is equal to the coefficient of $x^{r-2t-\gamma_{k+1}}$ in  $\frac{1}{(1-x)^{t+k}}$.  So, 
\begin{align*}  \displaystyle \sum_{\gamma_1+\ldots  \gamma_{k+1}=r-t}\displaystyle \prod_{i=1}^k { \gamma_i \choose \beta_i}=&\displaystyle \sum _{\gamma_{k+1}=0}^{r-t} {(r-\gamma_{k+1}-2t)+(t+k-1)\choose t+k-1}\\=& \displaystyle \sum _{\gamma_{k+1}=0}^{r-t} { (r-t+k-1-\gamma_{k+1}\choose t+k-1}\\
= &{r-t+k\choose t+k}.\\
 \end{align*} \end{proof}

 Now, when $s$ is even we consider the following determinant:

$$N_t= \left| \begin{array}{cccccc}
\displaystyle \sum_{j=1}^{r_1}\alpha_{1j} \ldots  &\displaystyle \sum_{j=1}^{r_i}\alpha_{ij}\ldots   & \displaystyle\sum_{j=1}^{r_k}\alpha_{kj}\\
\displaystyle \sum_{j=1}^{r_1}\alpha_{1j}^2 \ldots  &\displaystyle \sum_{j=1}^{r_i}\alpha_{ij}^2 \ldots   & \displaystyle\sum_{j=1}^{r_k}\alpha_{kj}^2\\

\vdots &\vdots& \vdots& \\

\displaystyle \sum_{j=1}^{r_1}\alpha_{1j}^{k-1} \ldots & \displaystyle \sum_{j=1}^{r_i}\alpha_{ij}^{k-1} \ldots   & \displaystyle\sum_{j=1}^{r_k}\alpha_{kj}^{k-1}\\
\displaystyle \sum_{j=1}^{r_1}\alpha_{1j}^{k+t} \ldots & \displaystyle \sum_{j=1}^{r_i}\alpha_{ij}^{k+t} \ldots   & \displaystyle\sum_{j=1}^{r_k}\alpha_{kj}^{k+t}\\
\end{array} \right|$$
with $\alpha_{ij} = a_{ij}(c-a_{ij})$, $r_i= b_i$ for $i=1, \ldots  k-1$ and $r_k = b_k/2$. Then, \\
$N_t = \displaystyle \sum_{1 \leq j_i \leq b_i}  \displaystyle \prod_{i=1}^k \alpha_{ij_i}V_t(\alpha_{1j_1}, \ldots , \alpha_{kj_k}).$\\
\\
Now consider
$$\displaystyle \sum_{0 \leq r \leq l} (-1)^{l-r}\nu_{l-r} \displaystyle \sum _{t = 0}^{[r/2]}(-1)^{t} \left[{k+r-t \choose k+t} +{k+r-t -1 \choose k+t-1} \right]c^{r-2t}N_t$$
It is equal to \\
 $\displaystyle \sum_{ j_i } \displaystyle \prod_{i=1}^k \alpha_{ij_i}V(\alpha_{1j_1} \ldots  \alpha_{kj_k}).$
 
\noindent \hspace{0.15cm} $\displaystyle \sum_{0 \leq r \leq l} (-1)^{l-r}\nu_{l-r} \displaystyle \sum _{t = 0}^{[r/2]}(-1)^{t} \left[ {k+r-t \choose k+t} + {k+r-t -1\choose k+t-1} \right]c^{r-2t} \displaystyle \sum_{ \sum \beta_i=t} \prod_{i=1}^k \alpha_{ij_i}^{\beta_i}$\\
\\
The following is the version of lemma \ref{lem4.1} for the case where $s$ is even.
\begin{lemma} \label{lem4.2} For all $c, a_i >0$ and $\alpha_i = a_i(c-a_i)$, we have \\
$\displaystyle \sum _{t = 0}^{[r/2]}(-1)^{t} \left[{k+r-t \choose k+t} +{ k+r-t -1 \choose k+t-1} \right]c^{r-2t}\displaystyle \sum_{ \sum \beta_i=t} \prod_{i=1}^k \alpha_{i}^{\beta_i} =$

\hspace{5.5cm} $\displaystyle \sum_{\beta_1+ \ldots \beta_s =r}  \prod_{i=1}^{k-1} a_i^{\beta_i}  (c-a_i)^{\beta_{k+i}} c^{\beta_{2k}} (a_k^{\beta_k}+(c-a_k)^{\beta_k})$.
\end{lemma}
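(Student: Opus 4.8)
The plan is to identify both sides as coefficients of $x^r$ in an explicit rational generating function, exactly as in the proof of Lemma~\ref{lem4.1}, and then to reduce the new factor $a_k^{\beta_k}+(c-a_k)^{\beta_k}$ to a sum of two generating functions, one of which is literally the one that appears in Lemma~\ref{lem4.1}. First I would read off the right-hand side as a coefficient. Since $\sum_{\beta\ge 0}\bigl(a_k^{\beta}+(c-a_k)^{\beta}\bigr)x^{\beta}=\frac{1}{1-a_kx}+\frac{1}{1-(c-a_k)x}$, the quantity $\sum_{\beta_1+\cdots+\beta_s=r}\prod_{i=1}^{k-1}a_i^{\beta_i}(c-a_i)^{\beta_{k+i}}c^{\beta_{2k}}\bigl(a_k^{\beta_k}+(c-a_k)^{\beta_k}\bigr)$ is the coefficient of $x^r$ in
$$G(x)=\frac{1}{1-cx}\left(\frac{1}{1-a_kx}+\frac{1}{1-(c-a_k)x}\right)\prod_{i=1}^{k-1}\frac{1}{(1-a_ix)(1-(c-a_i)x)}.$$

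Next I would simplify $G$ using the identity $(1-a_ix)(1-(c-a_i)x)=1-cx+\alpha_i x^2$ from the proof of Lemma~\ref{lem4.1}, together with $\frac{1}{1-a_kx}+\frac{1}{1-(c-a_k)x}=\frac{2-cx}{1-cx+\alpha_k x^2}$. Writing $2-cx=1+(1-cx)$ splits $G$ as $G=A+B$, where
$$A=\frac{1}{1-cx}\prod_{i=1}^{k}\frac{1}{1-cx+\alpha_i x^2},\qquad B=\prod_{i=1}^{k}\frac{1}{1-cx+\alpha_i x^2},$$
the factor $1-cx$ in the $B$-summand cancelling the leading $\frac{1}{1-cx}$.

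Now $A$ is exactly the generating function whose $x^r$-coefficient is computed in Lemma~\ref{lem4.1}, so that coefficient equals $\sum_{t=0}^{[r/2]}(-1)^t\binom{k+r-t}{k+t}c^{r-2t}\sum_{\sum\beta_i=t}\prod_{i=1}^k\alpha_i^{\beta_i}$, which produces the first binomial. For $B$ I would repeat verbatim the expansion used for Lemma~\ref{lem4.1}, the only structural difference being that the free index $\gamma_{k+1}$ arising from the factor $\frac{1}{1-cx}$ is now absent. The sign is still $(-1)^t$ and the power of $c$ is still $\sum_{i=1}^k(\gamma_i-\beta_i)=(r-t)-t=r-2t$; the only change is that the internal combinatorial identity shortens from $k+1$ to $k$ summation variables, giving $\sum_{\gamma_1+\cdots+\gamma_k=r-t}\prod_{i=1}^k\binom{\gamma_i}{\beta_i}=\binom{r-t+k-1}{t+k-1}$, namely the coefficient of $x^{r-2t}$ in $(1-x)^{-(t+k)}$ by the same negative-binomial argument. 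Hence the $x^r$-coefficient of $B$ is $\sum_{t=0}^{[r/2]}(-1)^t\binom{k+r-t-1}{k+t-1}c^{r-2t}\sum_{\sum\beta_i=t}\prod_{i=1}^k\alpha_i^{\beta_i}$, producing the second binomial, and adding the contributions of $A$ and $B$ yields precisely the left-hand side.

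The two coefficient extractions are routine once the split is in hand, since they mirror Lemma~\ref{lem4.1}. The only genuinely new point, and the step I expect to require the most care, is the bookkeeping for $B$: I must confirm that deleting the linear factor $\frac{1}{1-cx}$ lowers both entries of the binomial by exactly one (giving $\binom{k+r-t-1}{k+t-1}$ in place of $\binom{k+r-t}{k+t}$) while leaving the sign and the power $c^{r-2t}$ untouched. The clean cancellation forced by $2-cx=1+(1-cx)$ is what makes the decomposition into the ``Lemma~\ref{lem4.1}'' piece $A$ and the ``one-shorter'' piece $B$ work, and it is the crux of the argument.
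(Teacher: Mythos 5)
Your proof is correct and follows essentially the same route as the paper: both identify the right-hand side as the coefficient of $x^r$ in $\frac{1}{1-cx}\bigl(\frac{1}{1-a_kx}+\frac{1}{1-(c-a_k)x}\bigr)\prod_{i=1}^{k-1}\frac{1}{(1-a_ix)(1-(c-a_i)x)}$, use the numerator $2-cx$ to split it into the Lemma~\ref{lem4.1} piece and the piece without the factor $\frac{1}{1-cx}$, and apply the Lemma~\ref{lem4.1} expansion to each. The only difference is that you spell out explicitly the shortened identity $\sum_{\gamma_1+\cdots+\gamma_k=r-t}\prod_{i=1}^k\binom{\gamma_i}{\beta_i}=\binom{r-t+k-1}{t+k-1}$, which the paper leaves implicit as ``an argument similar to the proof of lemma~\ref{lem4.1}.''
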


 \begin{proof} The proof goes along the same lines as the odd case. \\
 $\displaystyle \sum_{\beta_1+ \ldots \beta_s =r}  \prod_{i=1}^{k-1} a_i^{\beta_i}  (c-a_i)^{\beta_{k+i}} c^{\beta_{2k}} \left(a_k^{\beta_k}+(c-a_k)^{\beta_k}\right) $
 is the coefficient of $x^r$ in
$${\frac {1}{1-cx}}\displaystyle \prod_{i=1}^{k-1} \frac{1}{1-a_ix} \frac{1}{1-(c-a_i)x} \left(\frac{1}{1-a_kx}+ \frac{1}{1-(c-a_k)x}\right)=$$
$${\frac {1}{1-cx}} \displaystyle \prod_{i=1}^{k} \frac{1}{1-a_ix} \frac{1}{1-(c-a_i)x} \left[1-(c-a_k)x+1-a_kx \right] =$$
$$ {\frac {1}{1-cx}}  \displaystyle \prod_{i=1}^{k} \frac{1}{1-a_ix} \frac{1}{1-(c-a_i)x} \left[ 2-cx \right]= $$
$$    \displaystyle \prod_{i=1}^{k} \frac{1}{1-a_ix} \frac{1}{1-(c-a_i)x} \left[1+{\frac{1}{1-cx}}\right]. $$

By an argument similar to the proof of lemma \ref {lem4.1} applied to the two sums separately, we see that this is the coefficient of $x^r$ in
$$  \displaystyle \sum _{t=0}^{[r/2]}(-1)^{t} {k-1+r-t \choose k-1+t} c^{r-2t} \displaystyle \sum_{ \sum \beta_i=t} \prod_{i=1}^k \alpha_{i}^{\beta_i} +  \displaystyle \sum _{t=0}^{[r/2]}(-1)^{t} {k +r-t \choose k +t} c^{r-2t} \displaystyle \sum_{ \sum \beta_i=t} \prod_{i=1}^k \alpha_{i}^{\beta_i}$$
 $=\displaystyle \sum _{t = 0}^{[r/2]}(-1)^{t} \left[{k+r-t \choose k+t} +{ k+r-t -1 \choose k+t-1} \right]c^{r-2t}\displaystyle \sum_{ \sum \beta_i=t} \prod_{i=1}^k \alpha_{i}^{\beta_i} .$
 
 \end{proof}
  
Now in both cases whether $s$ is even or odd we have
  \begin{lemma} \label{lem4.6}  
$\displaystyle \sum_{0 \leq r \leq l} (-1)^{l-r}\nu_{l-r} \displaystyle \sum_{\beta_1+ \dots + \beta_{2k+1}=r} \prod_{i=1}^k a_{ij_i}^{\beta_i}(c-a_{ij_i})^{\beta_{k+i}}c^{\beta_{2k+1}}$  
and \\
 $\displaystyle \sum_{0 \leq r \leq l} (-1)^{l-r}\nu_{l-r} \displaystyle \sum_{\beta_1+ \ldots \beta_s =r}  \prod_{i=1}^{k-1} a_{ij_i}^{\beta_i}  (c-a_{ij_i})^{\beta_{k+i}} c^{\beta_{2k}} (a_{kj_k}^{\beta_k}+(c-a_{kj_k})^{\beta_k})$ are both equal to\\
 $\displaystyle \sum_{1 \leq i_1 \leq \ldots  i_l \leq s}(\displaystyle \prod_{t=1}^l d_{i_tj_{i_t}}-(i_{t}+t-1)) .$
  \end{lemma}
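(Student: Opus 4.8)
The plan is to recognize each bracket as $\sum_{r=0}^l(-1)^{l-r}\nu_{l-r}g_r$ for an explicit generating function $g_r=[y^r]G(y)$, turn the whole expression into a single coefficient extraction, and then match it to $f_l$ by a reindexing and a sign-preserving bijection. In the odd case the inner sum $\sum_{\beta_1+\cdots+\beta_{2k+1}=r}\prod_{i=1}^k a_{ij_i}^{\beta_i}(c-a_{ij_i})^{\beta_{k+i}}c^{\beta_{2k+1}}$ is literally the complete homogeneous symmetric function $h_r$ of the $s=2k+1$ matched shifts $D_1,\dots,D_s$, listed in homological order as $a_{1j_1},\dots,a_{kj_k},c-a_{kj_k},\dots,c-a_{1j_1},c$; that is, $g_r=[y^r]\prod_{i=1}^s(1-D_iy)^{-1}$. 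In the even case Lemma~\ref{lem4.2} has already put the inner sum in the form $g_r=[y^r]G(y)$ with $G(y)=\big(1+\frac{1}{1-cy}\big)\prod_{i=1}^k\frac{1}{(1-a_{ij_i}y)(1-(c-a_{ij_i})y)}$. So in both cases it suffices to prove $\sum_{r=0}^l(-1)^{l-r}\nu_{l-r}[y^r]G(y)=f_l(D_1,\dots,D_s)$.

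Since $\nu_j=e_j(1,2,\dots,s+l-1)$, I have $(-1)^{l-r}\nu_{l-r}=[z^{l-r}]\prod_{\xi=1}^{s+l-1}(1-\xi z)$, so the left-hand side is the Cauchy product $[z^l]\big(\prod_{\xi=1}^{s+l-1}(1-\xi z)\,G(z)\big)$. To compare this with $f_l$ I rewrite $f_l$ via the substitution $j_t=i_t+t-1$, under which the weakly increasing range $1\le i_1\le\cdots\le i_l\le s$ becomes the strictly increasing range $1\le j_1<\cdots<j_l\le s+l-1$ and $f_l=\sum_{1\le j_1<\cdots<j_l\le s+l-1}\prod_{t=1}^l(D_{j_t-t+1}-j_t)$. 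In the odd case $G(z)=\sum_r h_r z^r=\prod_{i=1}^s(1-D_iz)^{-1}$; expanding $\prod_\xi(1-\xi z)$ against this and taking $[z^l]$ yields products of $l-r$ integers $-\xi_a$ (from a strictly increasing $\xi$-set) times $r$ shifts $D_{p_b}$ (from a weakly increasing multiset $p$), while expanding the rewritten $f_l$ gives, for each $t$, either the shift $D_{j_t-t+1}$ or the integer $-j_t$.

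The identity then follows from the bijection sending a marked subset $(\{j_1<\cdots<j_l\},A)$ with $A\subseteq\{1,\dots,l\}$ to the pair whose strictly increasing integer part is $\{j_t:t\notin A\}$ and whose weakly increasing shift part is $(j_t-t+1:t\in A)$; the weak monotonicity of the latter is exactly $j_{t'}-j_t\ge t'-t$, the signs and the two products agree factor by factor, and one checks the ranges match, the only nontrivial point being $p_b\le s$, which is forced by examining the largest marked index. Equivalently, one can evaluate $[z^l]$ by residues: the residue at infinity vanishes by a degree count, so the coefficient is minus the sum of the residues at $z=1/D_i$, a Lagrange-interpolation sum that is readily identified with $f_l$.

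The main obstacle is the even case, where the middle homological degree is self-dual and doubled, so $G$ is not a pure product of geometric series but carries the extra factor $1+\frac{1}{1-cy}$ coming from $a_{kj_k}^{\beta_k}+(c-a_{kj_k})^{\beta_k}$. Here I would split the coefficient extraction along $1+\frac{1}{1-cy}$, apply the preceding argument to each summand — one over the $2k$ values $\{a_{ij_i},c-a_{ij_i}\}$ and one over these together with $c$ — and then show the two contributions recombine into a single $f_l$ of the correctly ordered $2k$ shifts. Making the self-dual pair $\{a_{kj_k},c-a_{kj_k}\}$ and the extra node $c$ align with the shifts $i_t+t-1$ so that the two pieces assemble into one $f_l$ (rather than remaining two separate expressions, each carrying its own $\nu$'s) is the delicate step, and is where I expect essentially all of the work to lie.
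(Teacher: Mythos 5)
Your odd-case argument is correct and is, modulo the generating-function packaging, essentially the paper's own proof: the paper likewise expands $\prod_{t}(d_{i_tj_{i_t}}-(i_t+t-1))$, groups terms by the monomial in the shifts, and uses the fact that $i_t\mapsto i_t+t-1$ carries weakly increasing $l$-tuples in $[1,s]$ to strictly increasing ones in $[1,s+l-1]$, so that the integer factors attached to each monomial of total degree $r$ sweep out exactly the terms of $\nu_{l-r}$. (Both you and the paper leave the ``exactly one interleaving'' verification implicit; also, your aside that the residue/Lagrange sum is ``readily identified with $f_l$'' hides an identity of the same difficulty as the one being proved, so it is not really an independent shortcut.)

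The gap is in the even case, and it is a genuine one. First, the target you set yourself is false: the even-case expression is not a single $f_l$ of $2k$ shifts. Because of the factor $a_{kj_k}^{\beta_k}+(c-a_{kj_k})^{\beta_k}$, the $r=0$ term equals $2(-1)^l\nu_l$, whereas any single $f_l$ contributes $(-1)^l\nu_l$; what the expression actually equals (this is Remark \ref{rem4.6}) is the sum of two $f_l$'s, namely $f_l(a_{1j_1},\ldots,a_{kj_k},\ldots,c)+f_l(a_{1j_1},\ldots,c-a_{kj_k},\ldots,c)$ --- the lemma's right-hand side must be read as summing over both admissible values of $d_{kj_k}$. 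Second, your decomposition along $1+\frac{1}{1-cy}$ produces unusable pieces: one is $h_r$ of the $2k$ values $\{a_{ij_i},c-a_{ij_i}\}$ with $c$ \emph{missing}, the other is $h_r$ of $2k+1$ values; your core identity pairs exactly $s$ values with $\nu$'s built from $\{1,\ldots,s+l-1\}$, so it does not apply to the $(2k+1)$-value piece at all, and neither piece has the value-list of either target $f_l$. The step you flag as ``where all the work lies'' is thus aimed at a false statement through the wrong split. The correct (and easy) route is to undo Lemma \ref{lem4.2}'s consolidation rather than keep it: distribute $a_{kj_k}^{\beta_k}+(c-a_{kj_k})^{\beta_k}$, i.e.\ split $\frac{1}{1-a_{kj_k}y}+\frac{1}{1-(c-a_{kj_k})y}$ instead of $1+\frac{1}{1-cy}$, so the even-case inner sum becomes $h_r(D^{(1)})+h_r(D^{(2)})$ where $D^{(1)}$ and $D^{(2)}$ are each lists of exactly $s=2k$ values (both containing $c$, differing only in the $k$-th entry, $a_{kj_k}$ versus $c-a_{kj_k}$); then your odd-case identity applies verbatim to each list, and no recombination is needed.
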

 
\begin{proof}
For any given $r-$tuples $1 \leq \alpha_1 \leq \ldots \leq \alpha_r \leq s$ with $0 \leq r \leq l$ we will have $1 \leq \beta_1 \leq \ldots \leq \beta_{l-r} \leq s$ such that $\left\{ \alpha_1, \ldots \alpha_r \right\} \cup \left\{ \beta_1, \ldots \beta_{l-r} \right\}$ is equal to $\left\{i_1, \ldots i_l \right\}$.  In the product $\displaystyle \sum_{1 \leq i_1 \leq \ldots  i_l \leq s}(\displaystyle \prod_{t=1}^l d_{i_tj_{i_t}}-(i_{t}+t-1))$, the coefficient of $\displaystyle \prod_{1\leq  \alpha_1 \leq \ldots \leq \alpha_r \leq s}d_{\alpha_tj_{\alpha_t}}$  is $\displaystyle \sum_r (-1)^{l-r} \displaystyle \prod_{1 \leq \beta_1 < \ldots <\beta_{l-r} \leq s+l-1}\beta_1\ldots \beta_{l-r}  = \displaystyle \sum_r (-1)^{l-r}\nu_{l-r}$, since $i_t+t-1$ is strictly increasing until $i_l+l-1$ .
 Further,  if $s$ is odd $d_{ij} = a_{ij} , i\leq k$ and $d_{ij} = c- a_{(2k+1-i),j}, i> k$  and if $s$ is even  $d_{ij} = a_{ij} , i< k$ ;  $d_{ij} = c- a_{(2k-i),j}, i> k$ and $d_{kj} = a_{kj}$ or $ c-a_{kj}$.  Hence, 
$$\begin{matrix}
  \displaystyle \sum_{1 \leq i_1 \leq \ldots  i_l \leq s}(\displaystyle \prod_{t=1}^l d_{i_tj_{i_t}}-(i_{t}+t-1))  =&\\
 =  \displaystyle \sum_{0 \leq r \leq l} (-1)^{l-r}\nu_{l-r} \displaystyle \sum_{\beta_1+ \dots + \beta_{2k+1}=r} \prod_{i=1}^k a_{ij_i}^{\beta_i}(c-a_{ij_i})^{\beta_{k+i}}c^{\beta_{2k+1}} & , s= 2k+1 \\
= \displaystyle \sum_{0 \leq r \leq l} (-1)^{l-r}\nu_{l-r} \displaystyle \sum_{\beta_1+ \ldots \beta_s =r}  \prod_{i=1}^{k-1} a_{ij_i}^{\beta_i}  (c-a_{ij_i})^{\beta_{k+i}} c^{\beta_{2k}} (a_{kj_k}^{\beta_k}+(c-a_{kj_k})^{\beta_k})&, s= 2k\\
 \end{matrix}.
 $$  This completes the proof. 
\end{proof}
\begin{remark}\label{rem4.6} $\displaystyle \sum_{1 \leq i_1 \leq \ldots  i_l \leq s}(\displaystyle \prod_{t=1}^l d_{i_tj_{i_t}}-(i_{t}+t-1)) =    f_l(a_{1j_1}, \ldots , c)$, when $s=2k+1$ is odd.  However, when $s=2k$,   $d_{kj} $ can equal either $a_{kj}$ or $(c-a_{kj})$ and hence    

 $\displaystyle \sum_{1 \leq i_1 \leq \ldots  i_l \leq s}(\displaystyle \prod_{t=1}^l d_{i_tj_{i_t}}-(i_{t}+t-1)) = f_l(a_{1j_1}, \ldots a_{kj_k}, {\ldots c)+f_l(a_{1j_1}, \ldots , c- a_{kj_k}, \ldots c)}.$
 
 Thus, when $s=2k+1$, 
 
  $\displaystyle \sum_{0 \leq r \leq l} (-1)^{l-r}\nu_{l-r} \displaystyle \sum_{\beta_1+ \dots + \beta_{2k+1}=r} \prod_{i=1}^k a_{ij_i}^{\beta_i}(c-a_{ij_i})^{\beta_{k+i}}c^{\beta_{2k+1}}=  f_l(a_{1j_1}, \ldots , c)$ 
  
  and when $s=2k$, 
  
  $\displaystyle \sum_{r=0}^l  \displaystyle \sum_{\beta_1+ \ldots \beta_s =r} (-1)^{l-r}\nu_{l-r}  \prod_{i=1}^{k-1} a_{ij_i}^{\beta_i}  (c-a_{ij_i})^{\beta_{k+i}} c^{\beta_{2k}} (a_{kj_k}^{\beta_k}+(c-a_{kj_k})^{\beta_k})= $
  
  \hspace {4cm} $f_l(a_{1j_1}, \ldots a_{kj_k}, {\ldots c)+f_l(a_{1j_1}, \ldots , c- a_{kj_k}, \ldots c)}.$
\end{remark}
  
\begin {remark} \label {f_i} Suppose $R/I$ has a quasi-pure resolution. Since the Hilbert function of $R/I$ is unaltered by any cancellations, we may assume $d_{ij} > d_{{i-1j}}$ for all $i$.
Since $d_{ij} \geq i$ for all $i$, we get $d_{ij}-i \geq d_{i-1}j - (i-1)$ and hence $d_{p_j}-p \geq d_{q_j}-q$ for all $p \geq q$.
Now assume that not all factors in $\displaystyle \prod_{t=1}^l(d_{i_tj_{i_t}}-(i_t+t-1))$ are positive and let $p$ be the smallest integer with $d_{i_pj_{i_p}}- (i_p+p-1) < 0$. Then $p>1$ and $d_{i_{p-1}j_{i_{p-1}}}- (i_{p-1}+p-2) >0$. It follows that $$d_{i_{p-1}j_{i_{p-1}}}- (i_{p-1}+p-2) - d_{i_pj_{i_p}}- (i_p+p-1) \geq 2 $$
or equivalently $$i_p-i_{p-1} \geq d_{i_pj_{i_p}} - d_{i_{p-1}}j_{i_{p-1}}+1$$ 
$$d_{i_{p-1}}j_{i_{p-1}}-i_{p-1} \geq d_{i_pj_{i_p}} - i_p +1 > d_{i_pj_{i_p}}- i_p$$
which is a contradiction, and we get that $\displaystyle \prod_{t=1}^l(d_{i_tj_{i_t}}-(i_t+t-1)) \geq 0$.\\
    Thus,  $ f_l(d_{1j_1}, \ldots , d_{sj_s}) \geq 0$, for all $s$-tuples $(d_{1j_1}, \ldots , d_{sj_s})$, provided 
  the resolution is quasi-pure. 
  \end{remark}

\begin{proof}\textit{of theorem \ref{main}}.  We have $\alpha_{ij} = a_{ij}(c-a_{ij}) $ and $\alpha_{ij} -\alpha'_{ij} = (a_{ij}-a'_{ij'})(c-a_{ij}-a'_{ij'})$.  By the quasi-purity of the resolution of $S$ and lemma \ref{lemS}, $c\geq 2a_{ij}$ for all $i,j$. So $a_{i1} \leq a_{i2} \leq \ldots  \leq a_{ib_i}$ implies that $\alpha_{i1} \leq \alpha_{i2} \leq \ldots  \leq\alpha_{ib_i}  $.\\
We also have for all $1 \leq i \leq k$, $p_i = \mbox{min}_j \alpha_{ij} =m_iM_{s-i}$ and 
$P_i = \mbox{max}_j \alpha_{ij} =M_im_{s-i}.$\\
\\
We treat the even and odd cases separately. \\
Case $1$. $s=2k+1$ is odd. The resolution starts as in (\ref{eq3}) and by lemmas \ref {lem4.1}, \ref{lem4.6} and remark \ref {rem4.6} we have that \\
$ \displaystyle \sum_{1 \leq j_i \leq b_i} \displaystyle \prod_{i=1}^k \alpha_{ij_i}(c-2a_{ij_i}) V(\alpha_{1j_1}, \ldots \alpha_{kj_k}) .$

\hspace{3cm} $\displaystyle \sum_{r=0}^l (-1)^{l-r}\nu_{l-r} \displaystyle \sum _{t=0}^{[r/2]}(-1)^{t} { k+r-t \choose k+t} c^{r-2t}  \displaystyle \sum_{ \sum \beta_i=t} \prod_{i=1}^k \alpha_{ij_i}^{\beta_i} =$ \\
$$ \displaystyle \sum_{1 \leq j_i \leq b_i} \displaystyle \prod_{i=1}^k \alpha_{ij_i}(c-2a_{ij_i}).V(\alpha_{1j_1} \ldots \alpha_{kj_k}) f_l(a_{1j_1}, \ldots , c)$$
 By remark \ref {f_i}, $f_l(d_{1j_1}, \ldots  d_{sj_s}) \geq 0$. Further, $c \geq 2a_{ij_i}$by lemma \ref {lemS} and   \mbox{$V(\alpha_{1j_1}, \ldots , \alpha_{kj_k}) \geq 0$}, for $\alpha_{1j_1} \leq \ldots  \leq \alpha_{kj_k}$.   So \\
$\displaystyle \sum_{j_i}  \displaystyle \prod_{i=1}^k p_i(c-2a_{ij_i})V(\alpha_{1j_1}, \ldots , \alpha_{kj_k}) f_l\left( m_1, \ldots  m_k, M_{k+1}, \ldots  M_{s}\right)  \leq $ 

\hspace{2.5 cm}$\displaystyle \sum_{ j_i } \displaystyle \prod_{i=1}^k \alpha_{ij_i}(c-2a_{ij_i}).V(\alpha_{1j_1} \ldots \alpha_{kj_k}) f_l(a_{1j_1}, \ldots , c )$ 

\hspace{4cm} $ \leq \displaystyle \sum_{j_i}   \displaystyle \prod_{i=1}^k P_i(c-2a_{ij_i})V(\alpha_{1j_1}, \ldots , \alpha_{kj_k}) f_l\left( M_1, \ldots  M_k, m_{k+1}, \ldots  m_{s}\right)$

which is the same as

$f_l\left( m_1, \ldots  m_k, M_{k+1}, \ldots  M_{s}\right)\displaystyle \prod_{i=1}^k p_i det(Q) \leq$\\

\hspace{3 cm}$\displaystyle \sum_{ j_i } \displaystyle \prod_{i=1}^k \alpha_{ij_i}(c-2a_{ij_i}).V(\alpha_{1j_1} \ldots \alpha_{kj_k}) f_l(a_{1j_1}, \ldots , c)$ \\

 \hspace{6cm} $\leq f_l\left( M_1, \ldots  M_k, m_{k+1}, \ldots  m_{s}\right) \displaystyle \prod_{i=1}^k P_i det(Q)$\\
 \\
 where 
 $$Q =  \left( \begin{array}{cccccc}
\displaystyle \sum_{j=1}^{b_1}(c-2a_{1j}) \ldots  &\displaystyle \sum_{j=1}^{b_i}(c-2a_{ij}) \ldots   & \displaystyle\sum_{j=1}^{b_k}(c-2a_{kj})\\
\displaystyle \sum_{j=1}^{b_1}\alpha_{1j}(c-2a_{1j}) \ldots  &\displaystyle \sum_{j=1}^{b_i}\alpha_{ij}(c-2a_{ij}) \ldots   & \displaystyle\sum_{j=1}^{b_k}\alpha_{kj}(c-2a_{kj})\\

\vdots &\vdots& \vdots& \\

\displaystyle \sum_{j=1}^{b_1}\alpha_{1j}^{k-1}(c-2a_{1j}) \ldots & \displaystyle \sum_{j=1}^{b_i}\alpha_{ij}^{k-1}(c-2a_{ij}) \ldots   & \displaystyle\sum_{j=1}^{b_k}\alpha_{kj}^{k-1}(c-2a_{kj})\\
\end{array} \right)$$\\
$det Q >0$ since at least one of the $V(\alpha_{1j_1}, \ldots , \alpha_{kj_k}) > 0$ and  $\displaystyle \prod_{i=1}^k(c-2a_{ij_i})>0.$
Replacing the last column by the alternating sums of columns in $Q$ and using theorems \ref{thmS1} and \ref{th2}, we get
$$det Q = det \left(\begin{array}{cccc}\ldots  &-c \\
L& 0\end{array} \right) = c.detL$$
So 

$f_l\left( m_1, \ldots  m_k, M_{k+1}, \ldots  M_{s}\right) \displaystyle \prod_{i=1}^k p_i cdet(L) \leq $\\

\hspace{3 cm}$\displaystyle \sum_{ j_i } \displaystyle \prod_{i=1}^k \alpha_{ij_i}(c-2a_{ij_i}).V(\alpha_{1j_1} \ldots \alpha_{kj_k}) f_l(a_{1j_1}, \ldots , c)  $\\

\hspace{6cm} $\leq f_l\left( M_1, \ldots  M_k, m_{k+1}, \ldots  m_{s}\right)\ \displaystyle \prod_{i=1}^k P_i cdet(L)$\\
\\
On the other hand, we start with $M_{t}$ again. Replacing the last column of $M_{t}$ by alternating sums of the columns and using theorem \ref{th1} we get

$$M_{t} = \left|\begin{array}{cccc} L  &&&0 \\
\ldots  &&& \displaystyle \sum_{i=1}^k \displaystyle \sum_{j=1}^{b_i} (-1)^{i} a_{ij}^{k+t}(c-a_{ij})^{k+t}(c-2a_{ij}) \end{array} \right| $$
So $ \displaystyle \sum_{ j_i } \displaystyle \prod_{i=1}^k \alpha_{ij_i}(c-2a_{ij_i}).V(\alpha_{1j_1} \ldots \alpha_{kj_k}) f_l(d_{1j_1}, \ldots , d_{sj_s}) =  (s+l)!e_ldetL$ and hence the result.\\
\\
Case $2.$ $s=2k$ is even. Now the resolution starts as in (\ref{eq4}) and  by lemmas \ref {lem4.2}, \ref{lem4.6} and remark \ref{rem4.6} we obtain\\
 $\displaystyle \sum_{ j_i } \displaystyle \prod_{i=1}^k \alpha_{ij_i}V(\alpha_{1j_1} \ldots  \alpha_{kj_k}).$
 
\noindent \hspace{0.15cm} $\displaystyle \sum_{0 \leq r \leq l} (-1)^{l-r}\nu_{l-r} \displaystyle \sum _{t = 0}^{[r/2]}(-1)^{t} \left[{k+r-t \choose k+t} +{k+r-t -1\choose k+t-1} \right]c^{r-2t} \displaystyle \sum_{ \sum \beta_i=t} \prod_{i=1}^k \alpha_{ij_i}^{\beta_i}=$\\
 $\displaystyle \sum_{1 \leq j_i \leq b_i}  \displaystyle \prod_{i=1}^k \alpha_{ij_i}V(\alpha_{1j_1}, \ldots , \alpha_{kj_k}) \left( f_l(a_{1j_1}, \ldots a_{kj_k},\ldots , c) +f_l(a_{1j_1}, \ldots c-a_{kj_k},\ldots , c ) \right)$
 
\noindent Since $f_l(d_{1j_1}, \ldots  d_{sj_s}) \geq 0$ and $V(\alpha_{1j_1}, \ldots , \alpha_{kj_k}) \geq 0$, we have\\
$\displaystyle \sum_{j_i}  \displaystyle \prod_{i=1}^k p_iV(\alpha_{1j_1}, \ldots , \alpha_{kj_k}) (f_l \left( m_1, \ldots  m_k, M_{k+1}, \ldots  M_{s}\right) +f_l(m_1, \ldots m_{k-1},M_k ,\ldots M_s))  $

$ \leq \displaystyle \sum_{1 \leq j_i \leq b_i}  \displaystyle \prod_{i=1}^k \alpha_{ij_i}V(\alpha_{1j_1}, \ldots , \alpha_{kj_k}) \left( f_l(a_{1j_1}, \ldots a_{kj_k},\ldots , c) +f_l(a_{1j_1}, \ldots c-a_{kj_k},\ldots , c ) \right)$

$ \leq \displaystyle \sum_{j_i}   \displaystyle \prod_{i=1}^k P_i V(\alpha_{1j_1}, \ldots , \alpha_{kj_k})( f_l \left( M_1, \ldots  M_k, m_{k+1}, \ldots  m_{s}\right) +f_l(M_1, \ldots M_{k-1},m_k ,\ldots m_s))$

But, $$f_l \left( m_1, \ldots  m_k, M_{k+1}, \ldots  M_{s}\right) \leq f_l(m_1, \ldots m_{k-1},M_k ,\ldots  M_s)$$ and 
$$f_l(M_1, \ldots M_{k-1},m_k ,\ldots  m_s) \leq  f_l \left( M_1, \ldots  M_k, m_{k+1}, \ldots  m_{s}\right) . $$ 
Therefore,

$\displaystyle \sum_{j_i}  \displaystyle \prod_{i=1}^k p_iV(\alpha_{1j_1}, \ldots , \alpha_{kj_k}). 2 f_l \left( m_1, \ldots  m_k, M_{k+1}, \ldots  M_{s}\right)$  

\hspace{0.2cm}$ \leq \displaystyle \sum_{1 \leq j_i \leq b_i}  \displaystyle \prod_{i=1}^k \alpha_{ij_i}V(\alpha_{1j_1}, \ldots , \alpha_{kj_k})  \left( f_l(a_{1j_1}, \ldots a_{kj_k},\ldots , c) +f_l(a_{1j_1}, \ldots c-a_{kj_k},\ldots , c ) \right)$

\hspace{2cm} $ \leq \displaystyle \sum_{j_i}   \displaystyle \prod_{i=1}^k P_i V(\alpha_{1j_1}, \ldots , \alpha_{kj_k}) 2. f_l \left( M_1, \ldots  M_k, m_{k+1}, \ldots  m_{s}\right) $\\
 which is the same as
 
$2f_l\left( m_1, \ldots  m_k, M_{k+1}, \ldots  M_{s}\right) \displaystyle \prod_{i=1}^k p_i det(Q') \leq $

$ \displaystyle \sum_{1 \leq j_i \leq b_i}  \displaystyle \prod_{i=1}^k \alpha_{ij_i}V(\alpha_{1j_1}, \ldots , \alpha_{kj_k}) \left( f_l(a_{1j_1}, \ldots a_{kj_k},\ldots , c) +f_l(a_{1j_1}, \ldots c-a_{kj_k},\ldots , c ) \right)$

\hspace{5cm} $\leq 2f_l\left( M_1, \ldots  M_k, m_{k+1}, \ldots  m_{s}\right) \displaystyle \prod_{i=1}^k P_i det(Q')$\\ where
 $$Q'  =  \left( \begin{array}{cccccc}
r_1 \ldots  &r_i \ldots    & r_k\\
\displaystyle \sum_{j=1}^{r_1}\alpha_{1j} \ldots  &\displaystyle \sum_{j=1}^{r_i}\alpha_{ij} \ldots   & \displaystyle\sum_{j=1}^{r_k}\alpha_{kj}\\
\vdots &\vdots& \vdots& \\
\displaystyle \sum_{j=1}^{r_1}\alpha_{1j}^{k-1} \ldots & \displaystyle \sum_{j=1}^{r_i}\alpha_{ij}^{k-1} \ldots   & \displaystyle\sum_{j=1}^{r_k}\alpha_{kj}^{k-1}\\
\end{array} \right)$$
$det Q' >0$ since at least on of the $ V(\alpha_{1j_1}, \ldots , \alpha_{kj_k}) > 0.$
Replacing the last column by the alternating sums of columns and using theorems \ref{thmS2} and \ref{th2}, we get

$$det Q' = det \left(\begin{array}{cccc}\ldots  &-1 \\
L'& 0\end{array} \right) = detL'$$
Then\\
$2f_l\left( m_1, \ldots  m_k, M_{k+1}, \ldots  M_{s}\right) \displaystyle \prod_{i=1}^k p_i det(L') \leq$

 $\displaystyle \sum_{1 \leq j_i \leq b_i}  \displaystyle \prod_{i=1}^k \alpha_{ij_i}V(\alpha_{1j_1}, \ldots , \alpha_{kj_k})  \left( f_l(a_{1j_1}, \ldots a_{kj_k},\ldots , c) +f_l(a_{1j_1}, \ldots c-a_{kj_k},\ldots , c ) \right)$

\hspace {6cm} $ \leq 2f_l\left( M_1, \ldots  M_k, m_{k+1}, \ldots  m_{s}\right)\displaystyle \prod_{i=1}^k P_i det(L')$\\
On the other hand, we start with $N_{t}$ again. Replacing the last column of $N_{t}$ by alternating sums of the columns and using theorem \ref{th2}, we get:

$$N_{t} = \left|\begin{array}{cccc} L'  &&&0 \\
\ldots  &&& \displaystyle \sum_{i=1}^k \displaystyle \sum_{j=1}^{b_i} (-1)^{i} a_{ij}^{k+t}(c-a_{ij})^{k+t} \end{array} \right| $$\
So $(s+l)!e_ldetL'=$\\ 
$$\displaystyle \sum_{1 \leq j_i \leq b_i}  \displaystyle \prod_{i=1}^k \alpha_{ij_i}V(\alpha_{1j_1}, \ldots , \alpha_{kj_k}) \left( f_l(a_{1j_1}, \ldots a_{kj_k},\ldots , c) +f_l(a_{1j_1}, \ldots c-a_{kj_k},\ldots , c ) \right). $$
\\
We get,\\
$2f_l\left( m_1, \ldots  m_k, M_{k+1}, \ldots  M_{s}\right) \displaystyle \prod_{i=1}^k p_i \leq (s+l)!e_l $

\hspace{7cm} $ \leq   2f_l\left( M_1, \ldots  M_k, m_{k+1}, \ldots  m_{s}\right)\displaystyle \prod_{i=1}^k P_i $\\
\\
\\
$2f_l\left( m_1, \ldots  m_k, M_{k+1}, \ldots  M_{s}\right) m_1\ldots m_kM_kM_{k+1}\ldots M_{2k-1}, \leq (s+l)!e_l $\\

\hspace{3cm} $ \leq   2f_l\left( M_1, \ldots  M_k, m_{k+1}, \ldots  m_{s}\right)M_1 \ldots M_k m_k m_{k+1} \ldots m_{2k-1}$\\

\vspace{0.5cm}

$f_l\left( m_1, \ldots  m_k, M_{k+1}, \ldots  M_{s}\right) m_1\ldots m_kM_{k+1}\ldots M_{2k-1}(2M_k), \leq (s+l)!e_l $\\

\hspace{3cm} $ \leq   f_l\left( M_1, \ldots  M_k, m_{k+1}, \ldots  m_{s}\right)M_1 \ldots M_k m_{k+1} \ldots m_{2k-1}(2m_k)$\\
\\
Now $M_k = c-a_{k1}$ and $m_k=a_{k1}.$\\
Clearly $c \geq 2a_{k1} =2m_k$. Also $2(c-a_{k1})=c+(c-2a_{k1}) \geq c.$\\
So $M_{2k}=c \leq 2M_k$ and $2m_k \leq c=m_{2k}$ and hence\\
\\
 $f_l\left( m_1, \ldots  m_k, M_{k+1}, \ldots  M_{s}\right) m_1\ldots m_kM_{k+1}\ldots M_{2k-1}M_{2k}, \leq (s+l)!e_l $\\

\hspace{3cm} $ \leq   f_l\left( M_1, \ldots  M_k, m_{k+1}, \ldots  m_{s}\right)M_1 \ldots M_k m_{k+1} \ldots m_{2k-1}m_{2k}$ 

This completes the proof.
\end{proof}

Let $R = k[x_1, \ldots ,x_n]$ and $S=R/I$ where  $I$ is a homogeneous ideal of height $s$.

 \begin{corollary}  Let $S=R/I$ be as above. Suppose the betti diagram of $S$ is symmetric, that is $\beta_{ij}= \beta _{s-i,c-j}, 0\le i \le s$ and $\beta_{sj}=0, j\neq c$.  
  Then  the Hilbert coefficients will satisfy the same bounds as in theorem \ref{main}
  \end{corollary}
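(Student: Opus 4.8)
The plan is to observe that the proof of Theorem \ref{main} never invokes the Gorenstein property of $S$ as a ring; it uses only two numerical features of the resolution, both of which are supplied by the symmetry hypothesis. First I would record that $S$ is automatically Cohen--Macaulay: the symmetry $\beta_{ij}=\beta_{s-i,c-j}$ for $0\le i\le s$, together with $\beta_{00}=1$, forces the resolution to have length exactly $s$, so $\mathrm{pd}_R S=s=\mathrm{height}\,I$, and Auslander--Buchsbaum gives $\mathrm{depth}\,S=n-s=\dim S$. Consequently Theorem \ref{theorem1} applies verbatim, and the Peskine--Szpiro relations $\sum_{i=0}^s(-1)^i\sum_j d_{ij}^{\,t}=0$ for $0<t<s$ hold for the shifts $d_{ij}$ of $S$.

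Next I would extract the shift symmetry. Taking $i=0$ in $\beta_{ij}=\beta_{s-i,c-j}$ gives $\beta_{sc}=\beta_{00}=1$ and $\beta_{sj}=0$ for $j\ne c$, so $d_{sc}=c$ and $M_s=m_s=c$ exactly as in the Gorenstein case. More importantly, the symmetry says the multiset of shifts is self-dual: writing $d_{ij}=a_{ij}$ for $i\le k$ and pairing each $a_{ij}$ with the matching shift $c-a_{ij}$ in homological degree $s-i$, the numerical form of the resolution is precisely $(\ref{eq1})$ when $s=2k+1$ and $(\ref{eq2})$ when $s=2k$; in the even case the self-dual middle row $\beta_{kj}=\beta_{k,c-j}$ supplies the splitting into the $a_{kj}$ and $c-a_{kj}$ summands.

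With these two inputs in hand I would then run the entire machinery of Sections 3 and 4 unchanged. The point is that the proofs of Theorems \ref{thmS1}, \ref{thmS2}, \ref{th1} and \ref{th2} rest only on (i) the Peskine--Szpiro power-sum identities and (ii) the relation $d_{ij}+d_{s-i,j'}=c$, which through Lemma \ref{lemma0} converts each $\sum_i(-1)^i\sum_j d_{ij}^{s+r}$ into the Gorenstein power sums $\sum_{i=1}^k\sum_j(-1)^i a_{ij}^{k+t}(c-a_{ij})^{k+t}(c-2a_{ij})$ in the odd case and $\sum_{i=1}^k\sum_j(-1)^i a_{ij}^{k+t}(c-a_{ij})^{k+t}$ in the even case. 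Since quasi-purity is assumed exactly as in Theorem \ref{main}, Lemma \ref{lemS} and Remark \ref{f_i} furnish $c\ge 2a_{ij}$ and the positivity $f_l\ge 0$, so Lemmas \ref{lem4.1}, \ref{lem4.2}, \ref{lem4.6} and the determinantal reductions of $M_t,N_t,Q,Q'$ to $L,L'$ carry over word for word, producing the same upper and lower bounds.

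The hard part, and the only place where ``symmetric Betti diagram'' is genuinely weaker than ``Gorenstein'', is verifying the even middle row. I would check that the splitting underlying $(\ref{eq2})$ with $r_k=b_k/2$ survives the purely numerical hypothesis, including the possibility of a self-dual shift with $a_{kj}=c/2$. This is handled by noting that every identity used depends only on the multiset $\{\alpha_{kj}=a_{kj}(c-a_{kj})\}$, and that a shift at $c/2$ contributes the factor $c-2a_{kj}=0$ consistently across all of them; thus the determinants and power sums are unaffected, and the bounds of Theorem \ref{main} follow for every symmetric, quasi-pure resolution.
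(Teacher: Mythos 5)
Your overall strategy is the right one, and it is exactly the (unwritten) argument the paper has in mind: the proof of Theorem \ref{main} is purely numerical, resting only on the Peskine--Szpiro equations (available once $S$ is Cohen--Macaulay, which you correctly deduce from projective dimension $= s = \mathrm{height}\, I$ plus Auslander--Buchsbaum, so that Theorem \ref{theorem1} applies), on the self-dual shape (\ref{eq1}) or (\ref{eq2}) of the shifts, and on quasi-purity via Lemma \ref{lemS} and Remark \ref{f_i}. All of that transfers to a symmetric, quasi-pure Betti diagram.

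However, the one step you yourself flag as the crux --- the even middle row --- is handled incorrectly. You claim a self-dual shift $a_{kj}=c/2$ is harmless because it ``contributes the factor $c-2a_{kj}=0$ consistently across all identities.'' That factor occurs only in the odd case (in $M_t$, Theorem \ref{thmS1} and Theorem \ref{th1}); in the even case every relevant object --- $N_t$, Lemma \ref{lem4.2}, Theorems \ref{thmS2} and \ref{th2} --- involves $\alpha_{kj}=a_{kj}(c-a_{kj})$, which at $a_{kj}=c/2$ equals $c^2/4\neq 0$, so nothing vanishes. Moreover the danger is not that such a term contributes; it is that if the shift $c/2$ occurred with odd multiplicity, the middle shifts could not be partitioned into dual pairs $\{a_{kj},c-a_{kj}\}$, $r_k=b_k/2$ would not be an integer, and the sums ``$j$ from $1$ to $b_k/2$'' in Theorems \ref{thmS2} and \ref{th2} would not even be defined. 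The correct repair is a parity argument you never make: symmetry gives $b_i=b_{s-i}$, and the rank count $\sum_{i=0}^{s}(-1)^ib_i=\operatorname{rank} S=0$ (equivalently the $t=0$ Peskine--Szpiro equation) then yields, for $s=2k$, that $b_k=2\,(-1)^{k+1}\sum_{i=0}^{k-1}(-1)^ib_i$ is even; since the middle shifts different from $c/2$ pair off under $j\mapsto c-j$, the multiplicity $\beta_{k,c/2}$ is even as well. Hence the middle row always splits as in (\ref{eq2}), and the machinery of Sections 3 and 4 applies without modification. With that substitution in place of your zero-factor claim, your proof is complete.
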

  
  \begin{corollary} Let $S= R/I$ be as above.  Suppose the betti diagram of $S$ is a positive rational linear combinations of quasi-pure Gorenstein (symmetric) betti diagrams. Then the Hilbert Coefficients of $S$ satisfy the same bounds as in theorem \ref{main}. 
  \end{corollary}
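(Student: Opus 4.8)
The plan is to reduce the statement for $S$ to the single--algebra bounds of Theorem \ref{main} applied to each Gorenstein summand, exactly in the spirit of the Boij--S\"oderberg deduction used by Herzog and Zheng in \cite{HZ}. The starting observation is that $e_l$ is a $\mathbb{Q}$--linear functional of the Betti table: by Theorem \ref{theorem1} each $e_l$ is a fixed linear combination of the power sums $\sum_{i}(-1)^i\sum_j d_{ij}^{\,s+r}=\sum_{i,j}(-1)^i\beta_{ij}\,j^{\,s+r}$, which are themselves linear in the $\beta_{ij}$. Hence, writing the hypothesis as $\beta(S)=\sum_p\lambda_p\beta^{(p)}$ with $\lambda_p>0$ rational and each $\beta^{(p)}$ a quasi--pure Gorenstein (symmetric) diagram, we get $e_l(S)=\sum_p\lambda_p\,e_l^{(p)}$, where $e_l^{(p)}$ is the coefficient attached to $\beta^{(p)}$ by the same formula. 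Evaluating at homological degree $0$ gives $1=\beta_{00}(S)=\sum_p\lambda_p\beta^{(p)}_{00}=\sum_p\lambda_p$, so the combination is in fact convex.

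Next I would apply the single--algebra result to each piece. Each $\beta^{(p)}$ is symmetric and quasi--pure, so the symmetric case of Theorem \ref{main} (the situation of the corollary just proved) gives $f_l(\mathbf{y}^{(p)}_-)\,\tfrac{1}{(s+l)!}\prod\mathbf{y}^{(p)}_-\le e_l^{(p)}\le f_l(\mathbf{y}^{(p)}_+)\,\tfrac{1}{(s+l)!}\prod\mathbf{y}^{(p)}_+$, where $\mathbf{y}^{(p)}_-=(m_1^{(p)},\dots,m_k^{(p)},M_{k+1}^{(p)},\dots,M_s^{(p)})$ and $\mathbf{y}^{(p)}_+=(M_1^{(p)},\dots,M_k^{(p)},m_{k+1}^{(p)},\dots,m_s^{(p)})$ are built from the shifts of the $p$--th piece. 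Since all $\lambda_p$ are positive and all $\beta^{(p)}_{ij}\ge 0$, no cancellation occurs and the shifts of $S$ are recovered as $m_i(S)=\min_p m_i^{(p)}$ and $M_i(S)=\max_p M_i^{(p)}$, so $m_i(S)$ and $M_i(S)$ are controlled by the extreme members of the family.

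It then remains to compare the per--piece bounds with those for $S$ and sum, and here is where the main obstacle lies, of a different nature than in the Cohen--Macaulay case. In \cite{HZ} the lower bound is a coordinatewise--increasing function of the \emph{entire} minimal--shift vector $(m_1,\dots,m_s)$ and the upper bound of $(M_1,\dots,M_s)$, so along a Boij--S\"oderberg chain the extreme pieces dominate and each piece's bound is sandwiched by that of $S$. In the Gorenstein setting the lower bound is instead a function of the \emph{mixed} vector $\mathbf{y}_-$, whose first $k$ entries are minimal shifts and whose remaining entries are maximal shifts (dually for $\mathbf{y}_+$); consequently $m_i(S)\le m_i^{(p)}$ for $i\le k$ but $M_i(S)\ge M_i^{(p)}$ for $i>k$, so a termwise monotonicity comparison does \emph{not} go through directly. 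The way I would dissolve this is to exploit the symmetry of each piece, rewriting $M_{k+i}^{(p)}=c^{(p)}-m_{k+1-i}^{(p)}$ so that $\prod\mathbf{y}^{(p)}_-=c^{(p)}\prod_{i=1}^k p_i^{(p)}$ with $p_i^{(p)}=m_i^{(p)}(c^{(p)}-m_i^{(p)})=\min_j\alpha_{ij}^{(p)}$ (up to the even--case bookkeeping on the middle term already seen in the proof of Theorem \ref{main}), and dually $\prod\mathbf{y}^{(p)}_+=c^{(p)}\prod_{i=1}^k P_i^{(p)}$; combined with $c\ge 2a_{ij}$ from Lemma \ref{lemS} and the nonnegativity of every factor of $f_l$ from Remark \ref{f_i}, this recasts the comparison in terms of the concave quantity $x\mapsto x(c-x)$ and the single top degree $c^{(p)}$. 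The comparison then closes cleanly when all pieces share a common top degree $c$, for then $M_{k+i}(S)=c-m_{k+1-i}(S)$, so $\beta(S)$ is itself symmetric and Theorem \ref{main} applies to $S$ directly; the general case forces the pieces to be mutually comparable as in a Boij--S\"oderberg chain, and carrying the mixed--vector monotonicity through that chain is the delicate point on which the argument ultimately turns.
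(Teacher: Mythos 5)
The paper states this corollary without proof, so the comparison here is with the deduction the authors evidently intend: $e_l$ is linear in the Betti table (Theorem \ref{theorem1}), each quasi-pure symmetric piece satisfies the bounds of Theorem \ref{main} by the preceding corollary, and the two facts are to be combined. Your first two steps carry this out correctly: writing $\beta(S)=\sum_p\lambda_p\beta^{(p)}$, deducing $e_l(S)=\sum_p\lambda_p e_l^{(p)}$ and $\sum_p\lambda_p=1$, and bounding each $e_l^{(p)}$ by the shifts of its own piece. But what you have written is not a proof: the step that converts the per-piece bounds into the bounds for $S$ itself, namely $f_l(\mathbf{y}_-(S))\prod\mathbf{y}_-(S)\le f_l(\mathbf{y}^{(p)}_-)\prod\mathbf{y}^{(p)}_-$ for every $p$ (and dually for the upper bound), is exactly what you leave open, as your own final sentence concedes. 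You diagnose correctly why coordinatewise monotonicity fails --- $\mathbf{y}_-(S)$ is smaller than $\mathbf{y}^{(p)}_-$ in the first $k$ slots and larger in the last $s-k$ --- but neither of your proposed repairs closes the gap. The chain hypothesis is not part of the statement, and the common-$c$ shortcut is false as stated: a sum of quasi-pure symmetric diagrams with the same top shift $c$ is symmetric but need not be quasi-pure, and Theorem \ref{main} and the symmetric-diagram corollary require quasi-purity. For instance, with $s=5$ and $c=14$, the pure symmetric diagrams with degree sequences $(2,3,11,12,14)$ and $(4,6,8,10,14)$ are each quasi-pure, yet any positive combination of them has $M_1=4>3=m_2$, so you cannot ``apply Theorem \ref{main} to $S$ directly.''

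Two further points. First, an observation you missed that would genuinely help: every symmetric piece, normalized so that $\beta^{(p)}_{0,0}=1$, has $\beta^{(p)}_{s,c^{(p)}}=1$, and the numbers $\beta_{s,j}(S)=\sum_{c^{(p)}=j}\lambda_p$ are Betti numbers of an actual algebra, hence nonnegative integers; combined with $\sum_p\lambda_p=\beta_{0,0}(S)=1$ this forces all pieces to share one and the same $c$. So the ``general case'' of incomparable top degrees that your last paragraph struggles with never occurs for a cyclic algebra $S=R/I$. Second, even granting a common $c$, the argument still needs --- and neither you nor the paper supplies --- a monotonicity lemma for the mixed bound function: for fixed $c$, the map $(m_1,\ldots,m_k)\mapsto c\,f_l(m_1,\ldots,m_k,c-m_k,\ldots,c-m_1,c)\prod_{i=1}^k m_i(c-m_i)$ must be nondecreasing in each $m_i$ on the quasi-pure range $m_i\le c/2$, and dually in the $M_i$'s. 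This is not formal: the factor $f_l$ alone is \emph{decreasing} in $m_i$ (already for $s=3$, $l=2$ one computes $\partial f_2/\partial m=2m-c\le 0$ along the mixed vector $(m,c-m,c)$), so the claim is genuinely about the product, and it is the real content hidden behind this corollary. Until that lemma is proved (together with the even-$s$ middle-term bookkeeping), the statement does not follow from Theorem \ref{main} by the route you --- and, implicitly, the authors --- take.
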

  

\end{document}